\newcommand{\R}{\mathbb R}
\newcommand{\N}{\mathbb N}
\newcommand{\Z}{\mathbb Z}
\newcommand{\C}{\mathbb C}
\newcommand{\cC}{\mathcal C}
\newcommand{\cI}{\mathcal I}
\newcommand{\cK}{\mathcal K}
\newcommand{\cL}{\mathcal L}
\newcommand{\cM}{\mathcal M}
\newcommand{\cN}{\mathcal N}
\newcommand{\cS}{\mathcal S}
\newcommand{\cX}{\mathcal X}
\newcommand{\im}{\operatorname{im}}
\newcommand{\alg}{\operatorname{alg}}
\newcommand{\lin}{\operatorname{lin}}
\newcommand{\HS}{\mathcal{HS}(\ell_2)}
\newcommand{\ldss}{\mathcal{L}(s',s)}
\newcommand{\lstars}{\mathcal{L}^*(s)}
\newcommand{\bproof}{{\raggedright\textbf{Proof.}} \ }
\newcommand{\bqed}{\hspace*{\fill} $\Box $\medskip}
\let\epsilon\varepsilon
\let\phi\varphi
\theoremstyle{plain}
\newtheorem{Th}{Theorem}[section]
\newtheorem{Cor}[Th]{Corollary}
\newtheorem{Lem}[Th]{Lemma}
\newtheorem{Prop}[Th]{Proposition}
\newtheorem{Exam}[Th]{Example}
\theoremstyle{definition}
\newtheorem{Def}[Th]{Definition}
\theoremstyle{remark}
\newtheorem{Rem}[Th]{Remark}
\title{{\sc Commutative subalgebras of the algebra of smooth operators}}
\author{{\sc Tomasz Cia{\'s}}}
\date{}
\begin{document}

\begin{abstract}
We consider the Fr\'echet ${}^*$-algebra $\ldss\subseteq\cL(\ell_2)$ of the so-called smooth operators, i.e.  continuous linear operators from the dual $s'$ of the space $s$ of rapidly decreasing sequences 
into $s$. This algebra is a non-commutative analogue of the algebra $s$. We characterize all closed commutative \mbox{${}^*$-subalgebras} of $\ldss$ which are at the same time isomorphic to closed 
${}^*$-subalgebras of $s$ and we provide an example of a closed commutative ${}^*$-subalgebra of $\ldss$ which cannot be embedded into $s$.
\end{abstract}

\maketitle

\footnotetext[1]{{\em 2010 Mathematics Subject Classification.}
Primary: 46H35, 46J40. Secondary: 46A11, 46A63.

{\em Key words and phrases:} Topological algebras of operators, structure and classification of commutative topological algebras with involution,
nuclear Fr\'echet spaces, smooth operators.

{The research of the author was supported by the National Center of Science, grant no. 2013/09/N/ST1/04410}.}

\section{Introduction}
The algebra $\ldss$ can be represented as the algebra
\[\cK_\infty:=\{(x_{j,k})_{j,k\in\N}\in\C^{\N^2}:\sup_{j,k\in\N}|x_{j,k}|j^qk^q<\infty\textrm{ for all }q\in\N_0\}\] 
of rapidly decreasing matrices (with matrix multiplication and matrix complex conjugation).
Another representation of $\ldss$ is the algebra $\mathcal S(\R^2)$ of Schwartz functions on $\R^2$ with the Volterra convolution 
\[(f\cdot g)(x,y):=\int_\R f(x,z)g(z,y)\mathrm{d}z\] 
as multiplication and the involution
\[f^*(x,y):=\overline{f(y,x)}.\] 
In these forms, the algebra $\ldss$ usually appears and plays a significant role in $K$-theory of Fr\'echet algebras 
(see Bhatt and Inoue \cite[Ex. 2.12]{BhIO}, Cuntz \cite[p. 144]{Cuntz}, \cite[p. 64--65]{Cuntz2}, Gl\"ockner and Langkamp \cite{GlockLkamp}, 
Phillips \cite[Def. 2.1]{Phill}) and in $C^*$-dynamical systems (Elliot, Natsume and Nest \cite[Ex. 2.6]{ElNatNest}). Very recently, Piszczek obtained several rusults concerning
closed ideals, automatic continuity (for positive functionals and derivations), amenability and Jordan decomposition in $\cK_\infty$ 
(see Piszczek \cite{Pisz4,Pisz1} and his forthcoming papers `Automatic continuity and amenability in the non-commutative Schwartz space' and `The noncommutative Schwartz space is weakly amenable'). 
Moreover, in the context of algebras of unbounded operators, the algebra $\ldss$ appears in the book \cite{Sch} as 
\[\mathbb{B}_1(s):=\{x\in\cL(\ell_2)\colon x\ell_2\subseteq s, x^*\ell_2\subseteq s \text{ and } \overline{axb} \text{ is nuclear for all }a,b\in\lstars\},\]
where $\cL^*(s)$ is the so-called maximal $O^*$-algebra on $s$ (see also \cite[Def. 2.1.6, Prop. 2.1.8, Def. 5.1.3, Cor. 5.1.18, Prop. 5.4.1 and Prop. 6.1.5]{Sch}).

The algebra of smooth operators can be seen as a noncommutative analogue of the commutative algebra $s$.
The most important features of this algebra are the following:
\begin{itemize}
 \item it is isomorphic as a Fr\'echet space to the Schwartz space $\mathcal{S}(\R)$ of smooth rapidly decreasing functions on the real line;
 \item it has several representations as algebras of operators acting between natural spaces of distributions and functions (see \cite[Th. 1.1]{Dom});
 \item it is a dense ${}^*$-subalgebra of the $C^*$-algebra $\cK(\ell_2)$ of compact operators on $\ell_2$;
 \item it is (properly) contained in the intersection of all Schatten classes $\cS_p(\ell_2)$ over $p>0$; 
 in particular $\ldss$ is contained  the class $\HS$ of Hilbert-Schmidt operators, and thus it is a unitary space; 
 \item the operator $C^*$-norm $||\cdot||_{\ell_2\to\ell_2}$ is the so-called dominating norm on that algebra 
 (the dominating norm property is a key notion in the structure theory of nulcear Fr\'echet spaces -- see \cite[Prop. 3.2]{Cias} and \cite[Prop. 31.5]{MeV}). 
\end{itemize}

The main result of the present paper is a characterization of closed ${}^*$-subalgebras of $\ldss$ which are at the same time isomorphic as Fr\'echet ${}^*$-algebras to closed ${}^*$-subalgebras of $s$ 
(Theorem \ref{th_complemented_subalg}). It turns out that these are exactly those subalgebras which satisfy the classical condition $(\Omega)$ of Vogt. Then in Theorem \ref{th_counterexample} we give an example of a 
closed commutative ${}^*$-subalgebra of $\ldss$ which does not satisfy this condition. 

In order to prove this result we characterize in Section \ref{sec_kothe_algebras} closed ${}^*$-subalgebras of K\"othe sequence algebras (Proposition \ref{prop_subalg_kothe_alg}).
In particular, we give such a description for closed \mbox{${}^*$-subalgebras} of $s$ (Corollary \ref{cor_subalg_s2}).
In Section \ref{sec_kothe_repr} we describe all closed ${}^*$-subalgebras of $\ldss$ as suitable K\"othe sequence algebras (see Corollary \ref{cor_comm_subalg_max|f_j|2} and compare with \cite[Th. 4.8]{Cias}) 

The present paper is a continuation of \cite{Cias} and \cite{Dom} and it focuses on description of closed commutative 
${}^*$-subalgebras of $\ldss$ (especially those with the property ($\Omega$)). Most of the results have been already presented in the author PhD dissertation \cite{Cias_phd}.

\section{Notation and terminology}

Throughout the paper, $\N$ will denote the set of natural numbers $\{1,2,\ldots\}$ and $\N_0:=\N\cup\{0\}$.

By a \emph{projection}\index{projection} on the complex separable Hilbert space $\ell_2$ 
we always mean a continuous orthogonal (i.e. self-adjoint) projection. 

By $e_k$ we denote the vector in $\C^\N$ whose $k$-th coordinate equals 1 and the others equal 0.

By a \emph{Fr\'echet space} we mean a complete metrizable locally convex space over $\C$ (we will not use locally
convex spaces over $\R$). A \emph{Fr\'echet algebra} is a Fr\'echet space which is an algebra with continuous multiplication. 
A \emph{Fr\'echet ${}^*$-algebra} is a Fr\'echet algebra with continuous involution.

For locally convex spaces $E,F$, we denote by $\cL(E,F)$ the space of all continuous linear operators from $E$ to $F$. 
To shorten notation, we write $\cL(E)$ instead of $\cL(E,E)$.

We use the standard notation and terminology. All the notions from functional analysis are explained in \cite{Conw} or \cite{MeV} 
and those from topological algebras in \cite{Fra} or \cite{Zel}.

\section{Preliminaries}\label{preliminaries}

\paragraph{\textsc{The space $s$ and its dual}.}\label{par_s_s'}

We recall that the \emph{space of rapidly decreasing sequences}\index{space!of rapidly decreasing sequences} is the Fr\'echet space
\[s:=\bigg\{\xi=(\xi_j)_{j\in\N}\in\C^\N:|\xi|_q:=\bigg(\sum_{j=1}^\infty|\xi_j|^2j^{2q}\bigg)^{1/2}<\infty\text{ for all }q\in\N_0\bigg\}\]
with the topology corresponding to the system $(|\cdot|_q)_{q\in\N_0}$ of norms. 
We may identify the strong dual of $s$ (i.e. the space of all continuous linear functionals on $s$ with the topology of uniform convergence on bounded
subsets of $s$, see e.g. \cite[Def. on p. 267]{MeV}) with the \emph{space of slowly increasing sequences}\index{space!of slowly increasing sequences} 
\[s':=\bigg\{\xi=(\xi_j)_{j\in\N}\in\C^\N:|\xi|_q':=\bigg(\sum_{j=1}^\infty|\xi_j|^2j^{-2q}\bigg)^{1/2}<\infty\text{ for some }q\in\N_0\bigg\}\]
equipped with the inductive limit topology given by the system $(|\cdot|_q')_{q\in\N_0}$ of norms (note that for a fixed $q$, $|\cdot|'_q$ is defined
only on a subspace of $s'$).
More precisely, every $\eta\in s'$ corresponds to the continuous linear functional on $s$:
\[\xi\mapsto \langle \xi,\eta \rangle:=\sum_{j=1}^\infty \xi_j\overline{\eta_j}\]
(note the conjugation on the second variable).
These functionals are continuous, because, by the Cauchy-Schwartz inequality, for all $q\in\N_0$, $\xi\in s$ and $\eta\in s'$ we have
\begin{equation*}\label{eq_langle_xi_eta_rangle}
|\langle \xi,\eta\rangle|\leq |\xi|_q|\eta|'_q. 
\end{equation*} 
Conversely, one can show that for each continuous linear functional $y$ on $s$ there is $\eta\in s'$ such that $y=\langle\cdot,\eta\rangle$.
\label{functional_on_s} 

Similarly, we identify each $\xi\in s$ with the continuous linear functional on $s'$: 
\begin{equation*}\label{eq_langle_eta_xi_rangle}
\eta\mapsto\langle\eta,\xi\rangle:=\sum_{j=1}^\infty \eta_j\overline{\xi_j}. 
\end{equation*}
In particular, for each continuous linear functional $y$ on $s'$ there is $\xi\in s$ such that $y=\langle\cdot,\xi\rangle$.

We emphasize that the "scalar product" $\langle\cdot,\cdot\rangle$ 
is well-defined on $s\times s'\cup s'\times s$ and, of course, on $\ell_2\times\ell_2$.   

\paragraph{\textsc{Property (DN) for the space $s$ }.}\label{par_DN_s}
Closed subspaces of the space $s$ can be characterized by the so-called property (DN). 
\begin{Def}\label{def_DN}
A Fr\'echet space $(X,(||\cdot||_q)_{q\in\N_0})$ has the 
\emph{property} (DN) (see \cite[Def. on p. 359]{MeV}) if there is a continuous norm $||\cdot||$ on $X$ such that 
for all $q\in\N_0$ there is $r\in\N_0$ and $C>0$ such that 
\[||x||_q^2\leq C||x||\;||x||_r\]
for all $x\in X$. The norm $||\cdot||$ is called a \emph{dominating norm}.
\end{Def}

Vogt (see \cite{V1} and \cite[Ch. 31]{MeV}) proved that
a Fr\'echet space is isomorphic to a closed subspace of $s$ if and only if it is nuclear and it has the property (DN).

The (DN) condition for the space $s$ reads as follows (see \cite[Lemma 29.2(3)]{MeV} and its proof).  
\begin{Prop}\label{prop_l_2_DN_in_s}
For every $p\in\N_0$ and $\xi\in s$ we have
\[|\xi|_p^2\leq||\xi||_{\ell_2}|\xi|_{2p}.\]
In particular, the norm $||\cdot||_{\ell_2}$ is a dominating norm on $s$.
\end{Prop}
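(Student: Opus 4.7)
The inequality $|\xi|_p^2\leq \|\xi\|_{\ell_2}\,|\xi|_{2p}$ is a direct consequence of the Cauchy--Schwarz inequality applied with a carefully chosen splitting of the exponent $j^{2p}$. Concretely, I would write
\[
|\xi|_p^2=\sum_{j=1}^\infty |\xi_j|^2 j^{2p}=\sum_{j=1}^\infty |\xi_j|\cdot\bigl(|\xi_j|\,j^{2p}\bigr),
\]
and then apply Cauchy--Schwarz to the two sequences $(|\xi_j|)_j$ and $(|\xi_j|\,j^{2p})_j$ in $\ell_2$ to obtain
\[
\sum_{j=1}^\infty |\xi_j|^2 j^{2p}\leq \Bigl(\sum_{j=1}^\infty |\xi_j|^2\Bigr)^{1/2}\Bigl(\sum_{j=1}^\infty |\xi_j|^2 j^{4p}\Bigr)^{1/2}=\|\xi\|_{\ell_2}\,|\xi|_{2p},
\]
which is the asserted inequality.

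For the \emph{in particular} clause, I would verify each requirement of Definition~\ref{def_DN} applied to the Fr\'echet space $(s,(|\cdot|_q)_{q\in\N_0})$. The function $\|\cdot\|_{\ell_2}$ is clearly a norm on $s$; it is continuous since $\|\xi\|_{\ell_2}=|\xi|_0\leq |\xi|_q$ for every $q\in\N_0$. The main inequality just established shows that for each $q=p\in\N_0$ one can take $r:=2p\in\N_0$ and $C:=1$ in Definition~\ref{def_DN}, so $\|\cdot\|_{\ell_2}$ is a dominating norm on $s$.

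\textbf{Main obstacle.} There really is none of substance: the whole content of the proposition is the Cauchy--Schwarz splitting $j^{2p}=1\cdot j^{2p}$ inside the $\ell_2$-type sum, plus a bookkeeping check that $\|\cdot\|_{\ell_2}$ meets the formal requirements (norm, continuous on $s$) imposed by Definition~\ref{def_DN}. The only point worth being careful about is the constant---one must observe that the inequality holds with $C=1$ (rather than merely some unspecified constant), which is automatic from the Cauchy--Schwarz step.
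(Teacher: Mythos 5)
Your proof is correct and is essentially the standard argument the paper refers to (via \cite[Lemma 29.2(3)]{MeV}): the Cauchy--Schwarz splitting $|\xi_j|^2 j^{2p}=|\xi_j|\cdot(|\xi_j|j^{2p})$ yields the inequality with $C=1$, and the dominating-norm claim follows by taking $r=2p$ in Definition~\ref{def_DN}. Nothing to add.
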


\paragraph{\textsc{The algebra $\ldss$}.}\label{par_algebra_ldss}
  
It is a simple matter to show that $\ldss$ with the topology of uniform convergence on bounded sets in $s'$ is a Fr\'echet space. It is isomorphic to
$s\widehat\otimes s$, the completed tensor product of $s$ (see \cite[\S41.7 (5)]{Kot} and note that, $s$ being nuclear, there is only
one tensor topology), and thus $\ldss\cong s$ as Fr\'echet spaces 
(see e.g. \cite[Lemma 31.1]{MeV}).\label{ldss_cong_s} Moreover, it is easily seen that  
$(||\cdot||_q)_{q\in\N_0}$,
\[||x||_q:=\sup_{|\xi|_q^{'}\leq 1}{|x\xi|_q},\]
is a fundamental sequence of norms on $\ldss$.

Let us introduce multiplication and involution on $\ldss$. 
First observe that $s$ is a dense subspace of $\ell_2$, $\ell_2$ is a dense subspace of $s'$, and, moreover, the embedding maps 
$j_1\colon s\hookrightarrow\ell_2$, $j_2\colon\ell_2\hookrightarrow s'$ are continuous.
Hence, 
\begin{equation*}\label{eq_iota2}
\iota\colon\ldss\hookrightarrow\cL(\ell_2),\quad\iota(x):=j_1\circ x \circ j_2,
\end{equation*}
is a well-defined (continuous) embedding of $\ldss$ into the $C^*$-algebra
$\cL(\ell_2)$, and thus it is natural to define a multiplication on 
$\ldss$ by 
\[xy:=\iota^{-1}(\iota(x)\circ\iota(y)),\] 
i.e. 
\[xy=x\circ j\circ y,\] 
where $j:=j_2\circ j_1\colon s\hookrightarrow s'$. Similarly, an involution on $\ldss$ is defined by 
\[x^*:=\iota^{-1}(\iota(x)^*),\] 
where $\iota(x)^*$ is the hermitian adjoint of $\iota(x)$. 
One can show that these definitions are correct, i.e. $\iota(x)\circ\iota(y),\iota(x)^*\in\iota(\ldss)$ for all $x,y\in\ldss$ (see also \cite[p. 148]{Cias}).

From now on, we will identify $x\in\ldss$ and $\iota(x)\in\cL(\ell_2)$ (we omit $\iota$ in the notation).

A Fr\'echet algebra $E$ is called \emph{locally $m$-convex}\index{algebra!Fr\'echet!locally $m$-convex} if $E$ has a fundamental system of submultiplicative seminorms.   
It is well-known that $\ldss$ is locally $m$-convex (see e.g. \cite[Lemma 2.2]{Phill}), and moreover, the norms 
$||\cdot||_q$ are submultiplicative (see \cite[Prop. 2.5]{Cias}). This shows simultaneously that the multiplication introduced above is separately 
continuous, and thus, by \cite[Th. 1.5]{Zel}, it is jointly continuous. Moreover, by \cite[Cor. 16.7]{Fra}, the involution on $\ldss$ is continuous.
  
We may summarize this paragraph by saying that $\ldss$ is a noncommutative ${}^*$-subalgebra of the $C^*$-algebra $\cL(\ell_2)$ which is 
(with its natural topology) a locally $m$-convex Fr\'echet ${}^*$-algebra isomorphic as a Fr\'echet space to $s$.

\section{K\"othe algebras}\label{sec_kothe_algebras}

In this section we collect and prove some results on K\"othe algebras which are known for specialists but probably never published.   

\begin{Def}
A matrix $A=(a_{j,q})_{j\in\N,q\in\N_0}$ of non-negative numbers such that
\begin{enumerate}[\upshape(i)]
 \item for each $j\in\N$ there is $q\in\N_0$ such that $a_{j,q}>0$
 \item $a_{j,q}\leq a_{j,q+1}$ for $j\in\N$ and $q\in\N_0$
\end{enumerate}
is called a \emph{K\"othe matrix}. 

For $1\leq p<\infty$ and a K\"othe matrix $A$ we define the \emph{K\"othe space} 
\[\lambda^p(A):=\bigg\{\xi=(\xi_j)_{j\in\N}\in\C^\N:|\xi|_{p,q}:=\bigg(\sum_{j=1}^\infty|\xi_j|^pa_{j,q}|^p\bigg)^{1/p}<\infty\text{ for all }q\in\N_0\bigg\}\]
and for $p=\infty$
\[\lambda^\infty(A):=\bigg\{\xi=(\xi_j)_{j\in\N}\in\C^\N:|\xi|_{\infty,q}:=\sup_{j\in\N}|\xi_j|a_{j,q}<\infty\text{ for all }q\in\N_0\bigg\}\]
with the locally convex topology given by the seminorms $(|\cdot|_{p,q})_{q\in\N_0}$ (see e.g. \cite[Def. p. 326]{MeV}). 
\end{Def}

Sometimes, for simplicity, we will write $\lambda^\infty(a_{j,q})$ (i.e. only the entries of the matix) instead of $\lambda^\infty(A)$.  

It is well-known (see \cite[Lemma 27.1]{MeV}) that the spaces $\lambda^p(A)$ are Fr\'echet spaces and sometimes they are
Fr\'echet ${}^*$-algebras with pointwise multiplication and conjugation (e.g. if $a_{j,q}\geq1$ for all $j\in\N$ and $q\in\N_0$, see also \cite[Prop. 3.1]{Pir});
in that case they are called \emph{K\"othe algebras}.

Clearly, $s$ is the K\"othe space $\lambda^2(A)$ for $A=(j^q)_{j\in\N,q\in\N_0}$ and it is a Fr\'echet ${}^*$-algebra. 
Moreover, since the matrix $A$ satisfies the so-called
Grothendieck-Pietsch condition (see e.g. \cite[Prop. 28.16(6)]{MeV}), $s$ is nuclear, and thus it has also 
other K\"othe space representations (see again \cite[Prop. 28.16 \& Ex. 29.4(1)]{MeV}), i.e. for all $1\leq p\leq\infty$, $s=\lambda^p(A)$ as 
Fr\'echet spaces.

We use $\ell_2$-norms in the definition of $s$ to clarify our ideas, for example we have 
$|\xi|_0=||\xi||_{\ell_2}$ for $\xi\in s$ and $|\eta|_0'=||\eta||_{\ell_2}$ for $\eta\in\ell_2$. However, in some situations the supremum
norms $|\cdot|_{\infty,q}$ (as they are relatively easy to compute) or the $\ell_1$-norms will be more convenient.

\begin{Prop}\label{prop_isomorphisms_kothe_algebras}
Let $A=(a_{j,q})_{j\in\N,q\in\N_0}$, $B=(b_{j,q})_{j\in\N,q\in\N_0}$ be K\"othe matrices and for a bijection $\sigma\colon\N\to\N$
let $A_\sigma:=(a_{\sigma(j),q})_{j\in\N,q\in\N_0}$. Assume that $\lambda^\infty(A)$ and 
$\lambda^\infty(B)$ are Fr\'echet ${}^*$-algebras. 
Then the following assertions are equivalent:
\begin{enumerate}[\upshape (i)]
\item $\lambda^\infty(A)\cong\lambda^\infty(B)$ as Fr\'echet ${}^*$-algebras;
\item there is a bijection $\sigma\colon\N\to\N$ such that $\lambda^\infty(A_\sigma)=\lambda^\infty(B)$ 
as Fr\'echet ${}^*$-algebras;
\item there is a bijection $\sigma\colon\N\to\N$ such that $\lambda^\infty(A_\sigma)=\lambda^\infty(B)$ 
as sets;
\item there is a bijection $\sigma\colon\N\to\N$ such that
\begin{itemize}
 \item[($\alpha$)] $\forall q\in\N_0\;\exists r\in\N_0\;\exists C>0\;\forall j\in\N\quad a_{\sigma(j),q}\leq Cb_{j,r}$,
 \item[($\beta$)] $\forall r'\in\N_0\;\exists q'\in\N_0\;\exists C'>0\;\forall j\in\N\quad b_{j,r'}\leq C'a_{\sigma(j),q'}$.
\end{itemize}
\end{enumerate}
\end{Prop}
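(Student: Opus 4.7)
The plan is to establish the cycle $(\mathrm{i})\Rightarrow(\mathrm{ii})\Rightarrow(\mathrm{iii})\Rightarrow(\mathrm{iv})\Rightarrow(\mathrm{ii})$ together with the tautological $(\mathrm{ii})\Rightarrow(\mathrm{i})$. The implications $(\mathrm{ii})\Rightarrow(\mathrm{i})$ and $(\mathrm{ii})\Rightarrow(\mathrm{iii})$ are immediate from the definitions, so the work splits into the two routine topological steps $(\mathrm{iii})\Rightarrow(\mathrm{iv})\Rightarrow(\mathrm{ii})$ and the substantive step $(\mathrm{i})\Rightarrow(\mathrm{ii})$.

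For $(\mathrm{iv})\Rightarrow(\mathrm{ii})$, conditions $(\alpha)$ and $(\beta)$ say precisely that the fundamental seminorm systems of $\lambda^\infty(A_\sigma)$ and $\lambda^\infty(B)$ are equivalent; hence the two spaces coincide as Fr\'echet spaces, and since multiplication and involution on both are pointwise operations on $\C^{\N}$, they agree automatically. For $(\mathrm{iii})\Rightarrow(\mathrm{iv})$, once $\lambda^\infty(A_\sigma) = \lambda^\infty(B)$ as sets, the Closed Graph Theorem forces the identity to be a topological isomorphism between the two Fr\'echet spaces, and comparing seminorm systems term by term yields $(\alpha)$ and $(\beta)$.

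The content lies in $(\mathrm{i})\Rightarrow(\mathrm{ii})$. Given a Fr\'echet ${}^*$-algebra isomorphism $\Phi\colon\lambda^\infty(A)\to\lambda^\infty(B)$, I want to detect the permutation $\sigma$ purely algebraically. Every self-adjoint idempotent of the pointwise algebra $\lambda^\infty(A)$ is a real $0/1$-sequence, and the minimal nonzero ones are precisely the canonical unit vectors $e_k$, each of which lies in the algebra by K\"othe-matrix axiom (i). A ${}^*$-algebra isomorphism must permute the set of minimal nonzero self-adjoint idempotents, so there exists a bijection $\tau\colon\N\to\N$ with $\Phi(e_k)=e_{\tau(k)}$; put $\sigma:=\tau^{-1}$. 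To recover $\Phi$ coordinate-wise I would exploit the identity $\xi\cdot e_k = \xi_k e_k$ in $\lambda^\infty(A)$: applying $\Phi$ gives $\Phi(\xi)\cdot e_{\tau(k)} = \xi_k e_{\tau(k)}$, which reads off as $\Phi(\xi)_{\tau(k)}=\xi_k$, i.e.\ $\Phi(\xi)_j=\xi_{\sigma(j)}$ for every $j\in\N$. Thus $\Phi$ is nothing but the permutation operator attached to $\sigma$, and its bicontinuity as a map into $\lambda^\infty(B)$ is exactly the statement $\lambda^\infty(A_\sigma)=\lambda^\infty(B)$ as Fr\'echet ${}^*$-algebras.

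The main obstacle I anticipate is resisting the temptation to write $\xi=\sum_k\xi_k e_k$ and exploit Schauder basis properties, since for $p=\infty$ the unit vectors need not form a basis of $\lambda^\infty(A)$. The coordinate-wise trick $\xi\cdot e_k=\xi_k e_k$ circumvents this cleanly; once it is in place, the rest of the argument -- identifying minimal self-adjoint idempotents, transferring the fundamental seminorm systems via $\sigma$, and applying the Closed Graph Theorem -- is essentially bookkeeping.
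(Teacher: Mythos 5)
Your proposal is correct and follows essentially the same route as the paper: the permutation is detected through the idempotent structure (the paper rules out $\Phi(e_k)=e_I$ with $|I|\geq 2$ by a contradiction argument, you phrase it via minimal idempotents), (iii)$\Rightarrow$(iv) is the closed graph theorem applied to the identity map with the estimates evaluated on unit vectors, and (iv) gives the remaining implication by direct comparison of the weight systems. Your observation that $\xi\cdot e_k=\xi_k e_k$ forces $\Phi(\xi)_{\tau(k)}=\xi_k$ is a nice explicit justification of the step the paper states tersely (that $\Phi$ must be the permutation operator), since the unit vectors need not form a Schauder basis of $\lambda^\infty(A)$.
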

\bproof
(i)$\Rightarrow$(ii) Assume that there is an isomorphism $\Phi\colon\lambda^\infty(A)\to\lambda^\infty(B)$ of Fr\'echet ${}^*$-algebras. 
Clearly, if $\xi^2=\xi$, then $\Phi(\xi)=\Phi(\xi^2)=(\Phi(\xi))^2$,
and the same is true for $\Phi^{-1}$, i.e. $\Phi$ maps the idempotents of $\lambda^\infty(A)$ onto the idempotents of $\lambda^\infty(B)$.
Hence for a fixed $k\in\N$, there is $I\subset\N$ such that 
\[\Phi(e_{k})=e_I,\]
where $e_I$ is a sequence which has 1 on an index set $I\subset\N$ and 0 otherwise. 
Suppose that $|I|\geq2$ and let $j\in I$. Then $e_I=e_j+e_{I\setminus\{j\}}$, where $e_j\in \lambda^\infty(B)$ and 
$e_{I\setminus\{j\}}=e_I-e_j\in \lambda^\infty(B)$. Therefore, there are nonempty subsets $I_j,I_{j}'\subset\N$ such that $\Phi(e_{I_j})=e_j$ and 
$\Phi(e_{I_j'})=e_{I\setminus\{j\}}$. We have
\[e_{I_j}e_{I_j'}=\Phi^{-1}(e_j)\Phi^{-1}(e_{I\setminus\{j\}})=\Phi^{-1}(e_je_{I\setminus\{j\}})=0,\]
and thus $I_j\cap I_j'=\emptyset$. Consequently,
\[\Phi(e_k)=e_j+e_{I\setminus\{j\}}=\Phi(e_{I_j})+\Phi(e_{I_j'})=\Phi(e_{I_j\cup I_j'}),\]
whence $1=|\{k\}|=|I_j\cup I_j'|\geq2$, a contradiction. Hence $\Phi(e_k)=e_{n_k}$ for some $n_k\in\N$, i.e. 
for the bijection $\sigma\colon\N\to\N$ defined by $n_{\sigma(k)}:=k$ we have $\Phi(e_{\sigma(k)})=e_k$. 
Therefore, a Fr\'echet ${}^*$-isomorphism $\Phi$ is given by $(\xi_{\sigma(k)})_{k\in\N}\mapsto(\xi_k)_{k\in\N}$ 
for $(\xi_{\sigma(k)})_{k\in\N}\in\lambda^\infty(A)$, and thus $\lambda^\infty(A_\sigma)=\lambda^\infty(B)$ 
as Fr\'echet ${}^*$-algebras.

(ii)$\Rightarrow$(iii) Obvious.

(iii)$\Rightarrow$(iv) The proof follows from the observation that the identity map 
$\operatorname{Id}\colon\lambda^\infty(A_\sigma)\to\lambda^\infty(B)$ is continuous (use the closed graph theorem).

(iv)$\Rightarrow$(i) It is easy to see that $\Phi\colon \lambda^\infty(A)\to \lambda^\infty(B)$ defined by $e_{\sigma(k)}\mapsto e_k$ 
is an isomorphism of Fr\'echet ${}^*$-algebras.
\bqed

In the following proposition we characterize infinite-dimensional closed ${}^*$-subalgebras of nuclear K\"othe algebras whose elements tends to zero
(note that if a K\"othe space is contained in $\ell_\infty$ then it is a K\"othe algebra). 
Consequently, we obtain a characterization of closed ${}^*$-subalgebras of $s$ (Corollary \ref{cor_subalg_s2}). 

\begin{Prop}\label{prop_subalg_kothe_alg}
For $\cN\subset\N$ let $e_\cN$ denote a sequence which has 1 on $\cN$ and 0 otherwise. 
Let $A=(a_{j,q})_{j\in\N,q\in\N_0}$ be a K\"othe matrix such that $\lambda^\infty(A)$ is nuclear and $\lambda^\infty(A)\subset c_0$.
Let $E$ be an infinite-dimensional closed ${}^*$-subalgebra of $\lambda^\infty(A)$. Then 
\begin{enumerate}[\upshape(i)]
 \item there is a family $\{\cN_k\}_{k\in\N}$ of finite nonempty pairwise disjoint sets of natural numbers such that
 $(e_{\cN_k})_{k\in\N}$ is a Schauder basis of $E$;
 \item $E\cong\lambda^\infty\left(\max_{j\in\cN_k}a_{j,q}\right)$ as Fr\'echet ${}^*$-algebras and the isomorphism is given by
 $e_{\cN_k}\mapsto e_k$ for $k\in\N$.
\end{enumerate}

Conversely, if $\{\cN_k\}_{k\in\N}$ is a family of finite nonempty pairwise disjoint sets of natural numbers and $F$ is the closed ${}^*$-subalgebra 
of $\lambda^\infty(A)$ generated by the set $\{e_{\cN_k}\}_{k\in\N}$, then 
\begin{enumerate}
 \item[\emph{(iii)}] $(e_{\cN_k})_{k\in\N}$ is a Schauder basis of $F$;
 \item[\emph{(iv)}] $F\cong\lambda^\infty(\max_{j\in\cN_k}a_{j,q})$ as Fr\'echet ${}^*$-algebras and the isomorphism is given by
 $e_{\cN_k}\mapsto e_k$ for $k\in\N$.
\end{enumerate}
\end{Prop}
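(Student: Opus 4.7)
The plan is to recover the partition $\{\mathcal{N}_k\}$ canonically from the algebra structure of $E$, produce the idempotents $e_{\mathcal{N}_k}$ inside $E$ via holomorphic functional calculus followed by a finite-dimensional corner decomposition, and then identify $E$ with the claimed K\"othe algebra through its basis expansion.

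First I would introduce the equivalence relation $j \sim j' \Longleftrightarrow \xi_j = \xi_{j'}$ for every $\xi \in E$, together with $I_0 := \{j \in \N : \xi_j = 0 \text{ for all } \xi \in E\}$. Given $j \notin I_0$, pick $\xi \in E$ with $\xi_j \neq 0$; then every $j' \sim j$ satisfies $\xi_{j'} = \xi_j \neq 0$, and since $\xi \in c_0$ (using $\lambda^\infty(A) \subset c_0$) the class $[j]$ must be finite. Enumerate the classes outside $I_0$ as $\{\mathcal{N}_k\}_{k \in K}$; the hypothesis $\dim E = \infty$ forces $K$ to be infinite, because any $\xi \in E$ is constant on each $\mathcal{N}_k$ and vanishes on $I_0$, so one can take $K = \N$.

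The hardest step is showing $e_{\mathcal{N}_k} \in E$, and for this I rely on holomorphic functional calculus in the locally $m$-convex Fr\'echet algebra $\lambda^\infty(A)$ (or its unitization). Since $\xi \in c_0$, the spectrum of a self-adjoint $\xi \in E$ computed in the unitization equals $\{\xi_j : j \in \N\} \cup \{0\}$, a compact subset of $\R$ with $0$ as its only possible accumulation point, so every nonzero value $\lambda$ of $\xi$ is isolated. Choosing $f$ holomorphic in a neighborhood of this spectrum with $f \equiv 1$ near $\lambda$, $f \equiv 0$ on the remainder, and $f(0) = 0$, the calculus produces $f(\xi) = e_{I_\lambda} \in E$ for $I_\lambda := \{j : \xi_j = \lambda\}$. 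Because $I_\lambda$ is a finite union of classes $\mathcal{N}_k$, the corner $e_{I_\lambda}\cdot E$ is a finite-dimensional commutative ${}^*$-subalgebra of $E$; by semisimplicity its minimal idempotents are precisely the $e_{\mathcal{N}_k}$ with $\mathcal{N}_k \subset I_\lambda$, and hence they lie in $E$. Applying this to a suitably chosen $\xi$ for each $k$ gives $e_{\mathcal{N}_k} \in E$ for every $k$.

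With the idempotents in hand, each $\xi \in E$ admits the expansion $\xi = \sum_k c_k e_{\mathcal{N}_k}$ with $c_k$ the common value of $\xi$ on $\mathcal{N}_k$ (and $\xi_j = 0$ for $j \in I_0$); Fr\'echet convergence and uniqueness of the coefficients follow from the Schauder basis property of $(e_j)$ in the nuclear K\"othe space $\lambda^\infty(A)$ by regrouping into the finite blocks $\mathcal{N}_k$, giving (i). A direct calculation yields $|\xi|_{\infty,q} = \sup_k |c_k| \max_{j \in \mathcal{N}_k} a_{j,q}$, so the map $\xi \mapsto (c_k)_k$ is an isometric ${}^*$-algebra isomorphism of $E$ onto $\lambda^\infty(\max_{j \in \mathcal{N}_k} a_{j,q})$, proving (ii). For the converse, the closed ${}^*$-subalgebra $F$ generated by $\{e_{\mathcal{N}_k}\}$ is simply the closure of their linear span (the idempotents are mutually orthogonal and self-adjoint), and the canonical equivalence classes attached to $F$ are exactly the given $\mathcal{N}_k$; parts (iii) and (iv) then follow by applying (i) and (ii) to $F$.
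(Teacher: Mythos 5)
Your overall architecture is reasonable and its skeleton parallels the paper's: the same equivalence relation $i\sim j\Leftrightarrow \xi_i=\xi_j$ for all $\xi\in E$, finiteness of the classes via $\lambda^\infty(A)\subset c_0$, block idempotents, regrouping of the absolute basis $(e_j)$ to get (i)--(ii), and the converse deduced from the direct part. However, there is a genuine gap at the central step, precisely where you announce "the calculus produces $f(\xi)=e_{I_\lambda}\in E$". You perform the holomorphic functional calculus in $\lambda^\infty(A)$ (or its unitization), and that can only yield $e_{I_\lambda}\in\lambda^\infty(A)$ --- which is vacuous, since $e_{I_\lambda}$ is a finitely supported sequence and trivially lies in $\lambda^\infty(A)$. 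The whole difficulty of this step is membership in the \emph{closed subalgebra} $E$, and nothing in your argument delivers it: to run the calculus inside the unitization of $E$ you would need that the spectrum of $\xi$ relative to $E$ (not relative to $\lambda^\infty(A)$) is still $\{\xi_j:j\in\N\}\cup\{0\}$, a spectral-permanence statement for closed subalgebras of Fr\'echet locally $m$-convex algebras that you neither state nor prove; you also assert local $m$-convexity of $\lambda^\infty(A)$ without justification (it does hold here, but only after noting that $\lambda^\infty(A)\subset\ell_\infty$ forces the inclusion into $\ell_\infty$ to be continuous, which lets one build submultiplicative seminorms).

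The gap is fixable, but by an argument you would have to add. Since $\sigma(\xi)$ is a compact subset of $\R$, its complement is connected, so Runge's theorem gives polynomials $p_n\to f$ uniformly on a compact neighbourhood of $\sigma(\xi)$; setting $q_n:=p_n-p_n(0)$ and using either the Cauchy-integral estimate for $q_n(\xi)-f(\xi)$ or the elementary bound $|q_n(t)-f(t)|\le C\delta_n|t|$ on $\sigma(\xi)$ (valid because $f$ vanishes near $0$), one gets $\sup_j|q_n(\xi_j)-f(\xi_j)|a_{j,q}\le C\delta_n|\xi|_{\infty,q}\to0$, hence $q_n(\xi)\to e_{I_\lambda}$ with $q_n(\xi)\in E$, and closedness of $E$ finishes the step. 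This is exactly the content the paper obtains by bare hands: it takes powers $x_n=\big(\xi\overline{\xi}/|\eta_{m_1}|^2\big)^n\in E$ converging to the indicator of the top level set, peels off the level sets inductively, and then splits a level set into the classes $\cN_k$ via \cite[Lemma 4.1]{Cias}; your finite-dimensional corner argument is a legitimate substitute for that last splitting, granted the idempotent is already in $E$. Two smaller omissions are easy to repair: surjectivity of $\xi\mapsto(c_k)_k$ onto $\lambda^\infty(\max_{j\in\cN_k}a_{j,q})$ needs that the block matrix again satisfies the Grothendieck--Pietsch condition (so the unit vectors form a basis there), and the identification of the minimal idempotents of $e_{I_\lambda}E$ uses that $E$ separates distinct classes inside $I_\lambda$, which follows from the definition of $\sim$ but should be said.
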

\bproof
In order to prove (i) and (ii) set 
\[\cN_0:=\{j\in\N\colon\xi_j=0\quad\text{for all $\xi\in E$}\}\]
and define an equivalence relation $\sim$ on $\N\setminus\cN_0$ by
\[i\sim j\Leftrightarrow\xi_i=\xi_j\text{ for all $\xi\in E$}.\]
Since $E$ is infinite-dimensional, our relation produces infinitely many equivalence classes $\cN_k$, say  
\[\cN_k:=[\min(\N\setminus\cN_0\cup\ldots\cup\cN_{k-1})]_{/\sim}\]
for $k\in\N$. 

Fix $\kappa\in\N$ and take $\xi\in E$ such that $\xi_j\neq 0$ for $j\in\cN_{\kappa}$. Denote $\eta_k:=\xi_j$ if $j\in\cN_k$. Let 
\[\cM_1:=\{j\in\N\colon |\eta_j|=\sup_{i\in\N}{|\eta_i|}\}.\]
Assume we have already defined $\cM_1,\ldots,\cM_{l-1}$. If there is $j\in\N\setminus\{\cM_1\cup\ldots\cup\cM_{l-1}\}$ such that $\eta_j\neq0$
then we define
\[\cM_l:=\{j\in\N\colon |\eta_j|=\sup\{|\eta_i|\colon i\in\N\setminus\cM_1\cup\ldots\cup\cM_{l-1}\}\}.\]
Otherwise, denote $\cI:=\{1,\ldots,l-1\}$. If this procedure leads to infinite many sets $\cM_l$ then we set $\cI:=\N$.
It is easily seen that for each $l\in\mathcal{I}$ there is $\mathcal{I}_l\subset\N$
such that $\cM_l=\bigcup_{k\in\mathcal{I}_l}\cN_k$.
By assumption $\xi\in c_0$, hence $(|\eta_k|)_{k\in\N}\in c_0$ as well, and thus each $\cM_l$ is a finite nonempty set.

We first show that $e_{\cM_l}\in E$ for $l\in\cI$. For $l\in\cI$ fix $m_l\in\cM_l$. If $\cI=\{1\}$, then $\xi_j=0$ for $j\notin\cM_1$, and 
$e_{\cM_1}=\frac{\xi\overline{\xi}}{|\eta_{m_1}|^2}\in E$. Let us consider the case $|\cI|>1$. 
Since in nuclear Fr\'echet spaces every basis is absolute (and thus unconditional), we have 
\[\sum_{l\in\cI}|\eta_l|^2 e_{\cM_l}=\sum_{j=1}^\infty|\xi_j|^2e_j=\xi\overline{\xi}\in E,\]
and, consequently,
\[x_n:=\sum_{l\in\cI}\bigg(\frac{|\eta_l|}{|\eta_{m_1}|}\bigg)^{2n}e_{\cM_l}=\bigg(\frac{\xi\overline{\xi}}{|\eta_{m_1}|^2}\bigg)^n\in E\]
for all $n\in\N$.
Then for $q$ and $n$ we get
\begin{align*}
|x_n-e_{\cM_1}|_{\infty,q}
&=\bigg|\sum_{l\in\cI}^\infty\bigg(\frac{|\eta_l|}{|\eta_{m_1}|}\bigg)^{2n}e_{\cM_l}-e_{\cM_1}\bigg|_{\infty,q}
=\bigg|\sum_{l\in\cI\setminus\{1\}}\bigg(\frac{|\eta_l|}{|\eta_{m_1}|}\bigg)^{2n}e_{\cM_l}\bigg|_{\infty,q}\\
&\leq\sum_{l\in\cI\setminus\{1\}}\bigg(\frac{|\eta_l|}{|\eta_{m_1}|}\bigg)^{2n}|e_{\cM_l}|_{\infty,q}
\leq\frac{1}{|\eta_{m_1}|}\bigg(\frac{|\eta_{m_2}|}{|\eta_{m_1}|}\bigg)^{2n-1}\sum_{l\in\cI\setminus\{1\}}|\eta_l|\:|e_{\cM_l}|_{\infty,q}.
\end{align*}
Since $(e_j)_{j\in\N}$ is an absolute basis in $\lambda^\infty(A)$, the above series is convergent. Note also that $|\eta_{m_2}|<|\eta_{m_1}|$. 
This shows that $x_n\to e_{\cM_1}$ in $\lambda^\infty(A)$, and $e_{\cM_1}\in E$. Assume that $e_{\cM_1},\ldots,e_{\cM_{l-1}}\in E$. 
If $|\cI|=l-1$ then we are done. Otherwise, $\eta_{m_l}\neq0$ and  
\[x_n^{(l)}:=\left(\frac{\xi\overline{\xi}-\xi\overline{\xi}\sum_{j=1}^{l-1}e_{\cM_j}}{|\eta_{m_l}|^2}\right)^n\in E\]
for $n\in\N$. As above we show that $x_n^{(l)}\to e_{\cM_l}$ in $\lambda^\infty(A)$, and thus $e_{\cM_l}\in E$. Proceeding by induction, we prove
that $e_{\cM_l}\in E$ for $l\in\cI$.

Now, we shall prove that $(e_{\cN_k})_{k\in\N}$ is a Schauder basis of $E$. Choose $\iota\in\cI$ such that $\kappa\in\cI_\iota$ and 
for $k\in\cI_{\iota}$ let $n_k$ be an arbitrary element of $\cN_k$. Then $\sum_{k\in\cI_\iota}\eta_{n_k}e_{\cN_k}=\xi e_{\cM_\iota}\in E$.
Consequently, by \cite[Lemma 4.1]{Cias}, $e_{\cN_\kappa}\in E$. Since $\kappa$ was arbitrarily choosen, each $e_{\cN_k}$ is in $E$ and 
it is a simple matter to show that $(e_{\cN_k})_{k\in\N}$ is a Schauder basis of $E$.

Moreover, $|e_{\cN_k}|_{\infty,q}=\max_{j\in\cN_k}{a_{j,q}}$ hence, by \cite[Cor. 28.13]{MeV} and nuclearity, 
$E$ is isomorphic as a Fr\'echet space to $\lambda^\infty(\max_{j\in\cN_k}{a_{j,q}})$. The analysis of the proof of \cite[Cor. 28.13]{MeV}
shows that this isomorphism is given by $e_{\cN_k}\mapsto e_k$ for $k\in\N$, and thus it is also a Fr\'echet ${}^*$-algebra isomorphism.

Now, we prove (iii) and (iv). First note that every element of $F$ is the limit of elements of the form $\sum_{k=1}^Mc_ke_{\cN_k}$, 
where $M\in\N$ and $c_1,\ldots,c_M\in\C$. Therefore, if $\xi\in F$, then $\xi_i=\xi_j$ for $k\in\N$ and $i,j\in\cN_k$. This shows that each $\xi\in F$
has the unique series representation $\xi=\sum_{k=1}^\infty \xi_{n_k}e_{\cN_k}$, where $(n_k)_{k\in\N}$ is an arbitrarily choosen sequence such that
$n_k\in\cN_k$ for $k\in\N$. Since the series is absolutely convergent, $(e_{\cN_k})_{k\in\N}$ is a Schauder basis of $F$. Statement (iv) 
follows by the same method as in (ii). 
\bqed

\begin{Cor}\label{cor_subalg_s2}
Every infinite-dimensional closed ${}^*$-subalgebra of $s$ is 
isomorphic as a Fr\'echet ${}^*$-algebra to $\lambda^\infty(n_k^q)$ for some strictly increasing sequence $(n_k)_{k\in\N}$ of natural numbers.
Conversely, if $(n_k)_{k\in\N}$ is a strictly increasing sequence of natural numbers, 
then $\lambda^\infty(n_k^q)$ is isomorphic as a Fr\'echet ${}^*$-algebra to some infinite-dimensional closed 
${}^*$-subalgebra of $s$.  
Moreover, every closed ${}^*$-subalgebra of $s$ is a complemented subspace of $s$.
\end{Cor}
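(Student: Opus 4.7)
The plan is to apply Proposition \ref{prop_subalg_kothe_alg} with the K\"othe matrix $A:=(j^q)_{j\in\N,q\in\N_0}$, noting that $s=\lambda^\infty(A)$ as Fr\'echet ${}^*$-algebras by nuclearity (as remarked after the definition of K\"othe algebras) and $s\subset c_0$, so the hypotheses are met. For the first assertion, given an infinite-dimensional closed ${}^*$-subalgebra $E\subset s$, parts (i)--(ii) of the proposition furnish a family $\{\cN_k\}_{k\in\N}$ of pairwise disjoint finite nonempty subsets of $\N$ with
\[E\cong\lambda^\infty\!\left(\max_{j\in\cN_k}j^q\right)=\lambda^\infty(n_k^q),\qquad n_k:=\max\cN_k.\]
Since the $\cN_k$ are pairwise disjoint, the $n_k$ are pairwise distinct natural numbers, so after permuting the index $k$ (which preserves the Fr\'echet ${}^*$-algebra isomorphism class by Proposition \ref{prop_isomorphisms_kothe_algebras}) I may assume the sequence is strictly increasing.

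For the converse, I would take $\cN_k:=\{n_k\}$ and apply parts (iii)--(iv): the closed ${}^*$-subalgebra of $s$ generated by $\{e_{n_k}\}_{k\in\N}$ is Fr\'echet ${}^*$-isomorphic to $\lambda^\infty(n_k^q)$.

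For the complementation, I propose the projection
\[P\colon s\to E,\qquad P(\xi):=\sum_{k=1}^\infty\xi_{n_k}\,e_{\cN_k},\]
where $\{\cN_k\}$ is the family from part (i) and $n_k=\max\cN_k$. The crucial estimate is that for $j\in\cN_k$,
\[|P(\xi)_j|\,j^q=|\xi_{n_k}|\,j^q\leq|\xi_{n_k}|\,n_k^q\leq|\xi|_{\infty,q},\]
yielding $|P(\xi)|_{\infty,q}\leq|\xi|_{\infty,q}$; hence $P$ is well-defined, continuous, and lands in $E$ (since $(e_{\cN_k})$ is a Schauder basis of $E$ by part (i)). That $P|_E=\id_E$, and therefore $P^2=P$, follows immediately from $P(e_{\cN_l})=e_{\cN_l}$. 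The only delicate point in the argument is the selection $n_k=\max\cN_k$ in the formula for $P$: picking any other coordinate from $\cN_k$ would in general fail to produce a continuous operator into $s$, since then $j^q/m_k^q$ need not be bounded in $j\in\cN_k$.
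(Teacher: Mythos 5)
Your proposal is correct and follows essentially the same route as the paper: apply Proposition \ref{prop_subalg_kothe_alg} to the matrix $(j^q)_{j\in\N,q\in\N_0}$, use $\max_{j\in\cN_k}j^q=(\max\cN_k)^q$ together with Proposition \ref{prop_isomorphisms_kothe_algebras} to reindex so that $(n_k)$ is strictly increasing, and prove complementation via exactly the projection the paper uses, namely $(\pi\xi)_j=\xi_{n_k}$ for $j\in\cN_k$ with $n_k=\max\cN_k$ and the same estimate $j^q\leq n_k^q$. No gaps worth noting.
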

\bproof
We apply Proposition \ref{prop_subalg_kothe_alg} to the K\"othe matrix $(j^q)_{j\in\N,q\in\N_0}$. 
Let $\{\cN_k\}_{k\in\N}$ be a family of finite nonempty pairwise disjoint sets of natural numbers. We have
\begin{equation}\label{eq_maxjq}
\max_{j\in\cN_k}j^q=(\max\{j:j\in\cN_k\})^q 
\end{equation}
for all $q\in\N_0$ and $k\in\N$.
Let $\sigma\colon\N\to\N$ be the bijection for which $(\max\{j:j\in\cN_{\sigma(k)}\})_{k\in\N}$ is (strictly) increasing and let 
$n_k:=\max\{j:j\in\cN_{\sigma(k)}\}$ for $k\in\N$. Then, by Proposition \ref{prop_isomorphisms_kothe_algebras},
\[\lambda^\infty\bigg(\max_{j\in\cN_k}j^q\bigg)\cong\lambda^\infty(n_k^q)\]
as Fr\'echet ${}^*$-algebras, and therefore the first two statements follow from Proposition \ref{prop_subalg_kothe_alg}.

Now, let $E$ be a closed ${}^*$-subalgebra of $s$. If $E$ is finite dimensional then, clearly, $E$ is complemented in $s$. Otherwise, by Proposition \ref{prop_subalg_kothe_alg}(i), 
$E$ is a closed linear span of the set $\{e_{\cN_k}\}_{k\in\N}$ for some family $\{\cN_k\}_{k\in\N}$ of finite nonempty pairwise disjoint sets of natural numbers. Define $\pi\colon s\to E$ by
\begin{displaymath}
(\pi x)_j:= \left\{ \begin{array}{ll}
x_{n_k} & \textrm{for $j\in\cN_{\sigma(k)}$}\\
0 & \textrm{otherwise}
\end{array} \right.
\end{displaymath}
where $(n_k)_{k\in\N}$ and $\sigma$ are as above. From (\ref{eq_maxjq}) we have for every $q\in\N_0$
\begin{align*}
|\pi x|_{\infty,q}&=\sup_{j\in\N}|(\pi x)_j|j^q\leq\sup_{k\in\N}|x_{n_k}|\max_{j\in\cN_{\sigma(k)}}j^q=\sup_{k\in\N}|x_{n_k}|(\max\{j:j\in\cN_k\})^q\\
&=\sup_{k\in\N}|x_{n_k}|n_k^q\leq\sup_{j\in\N}|x_j|j^q=|x|_{\infty,q},
\end{align*}
and thus $\pi$ is well-defined and continuous. Since $\pi$ is a projection, our proof is complete.

\bqed

\section{Representations of closed commutative ${}^*$-subalgebras of $\ldss$ by K\"othe algebras}\label{sec_kothe_repr} 

The aim of this section is to describe all closed commutative ${}^*$-subalgebras of $\ldss$ as K\"othe algebras $\lambda^\infty(A)$ for matrices $A$ determined by orthonormal sequences 
whose elements belong to the space $s$ (Theorem \ref{th_comm_subalg_f_n2} and Corollaries \ref{cor_comm_subalg_max|f_j|2} and \ref{cor_max_comm_subalg_f_n2}).
For the convenience of the reader, we quote two results from \cite{Cias} (with minor modifications which do not require extra arguments).

For a subset $Z$ of $\ldss$ we will denote by $\alg(Z)$ ($\overline{\operatorname{lin}}(Z)$, resp.) the closed ${}^*$-subalgebra of $\ldss$ 
generated by $Z$ (the closed linear span of $Z$, resp.).

By \cite[Lemma 4.4]{Cias}, every closed commutative ${}^*$-subalgebra $E$ of $\ldss$ admits a special Schauder basis. This basis consists of
all nonzero minimal projections in $E$ (\cite[Lemma 4.4]{Cias} shows that these projections are pairwise orthogonal) and we call it the \emph{canonical Schauder basis} of $E$.

\begin{Prop}\label{prop_P_n_basic_sequnce}\cite[Prop. 4.7]{Cias}
Every sequence $\{P_k\}_{k\in\cN}\subset\ldss$ of nonzero pairwise orthogonal projections is 
the canonical Schauder basis of the algebra $\alg(\{P_k\}_{k\in\cN})$.
In particular, $\{P_k\}_{k\in\cN}$ is a basic sequence in $\ldss$, i.e. 
it is a Schauder basis of the Fr\'echet space $\overline{\operatorname{lin}}(\{P_k\}_{k\in\cN})$.
\end{Prop}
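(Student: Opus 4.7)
The plan is to apply \cite[Lemma 4.4]{Cias} to $E:=\alg(\{P_k\}_{k\in\cN})$ and then identify the given sequence with the canonical Schauder basis that lemma furnishes. Pairwise orthogonality gives $P_iP_j=\delta_{ij}P_i$, so the generators commute and $E$ is a closed commutative ${}^*$-subalgebra of $\ldss$; moreover every word in the self-adjoint generators reduces either to $0$ or to a single $P_k$, whence $E=\overline{\lin}(\{P_k\}_{k\in\cN})$. The ``in particular'' clause of the proposition is therefore an automatic consequence of the main statement. By \cite[Lemma 4.4]{Cias}, $E$ admits a canonical Schauder basis $\{R_j\}_{j\in J}$ of nonzero, pairwise orthogonal, minimal projections (with $J$ countable by separability), and the strategy is to prove $\{P_k\}_{k\in\cN}=\{R_j\}_{j\in J}$ as subsets of $\ldss$.

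Fix $k\in\cN$ and expand $P_k=\sum_{j\in J}\alpha_{k,j}R_j$ in the canonical basis. Applying $R_\ell$ on the left (using continuity of multiplication and $R_\ell R_j=\delta_{\ell j}R_j$) yields $R_\ell P_k=\alpha_{k,\ell}R_\ell$. Because $R_\ell P_k$ is a product of commuting projections it is itself a projection, hence $\alpha_{k,\ell}\in\{0,1\}$ for every $\ell$, and $P_k=\sum_{j\in J_k}R_j$ for some nonempty $J_k\subseteq J$. Since $\ldss\subset\HS$, every projection in $\ldss$ is compact and hence finite rank, so $J_k$ is finite. The orthogonality relation $P_kP_\ell=0$ for $k\neq\ell$ together with continuity of the coefficient functionals of the basis $\{R_j\}$ then forces $J_k\cap J_\ell=\emptyset$.

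The crux of the proof is showing $|J_k|=1$ for every $k$. Suppose for contradiction that some $j_1\in J_k$ satisfies $R_{j_1}\neq P_k$. Since $R_{j_1}$ is one of the orthogonal summands of $P_k$, the identity $R_{j_1}P_k=R_{j_1}$ holds. On the other hand, $R_{j_1}\in E=\overline{\lin}(\{P_\ell\}_{\ell\in\cN})$ is the $\ldss$-limit of finite sums $\sum_\ell c_\ell^{(m)}P_\ell$; multiplying on the right by $P_k$ and using continuity of multiplication together with pairwise orthogonality produces $R_{j_1}=\lim_m c_k^{(m)}P_k$. Since $\C P_k$ is a one-dimensional, and therefore closed, subspace of $\ldss$ (as $P_k\neq 0$), the scalars $c_k^{(m)}$ converge to some $c\in\C$ with $R_{j_1}=cP_k$. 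A nonzero scalar multiple of a projection is itself a projection only when $c=1$, whence $R_{j_1}=P_k$, contradicting the choice of $j_1$. This gives $|J_k|=1$ and a well-defined injection $\pi\colon\cN\to J$ via $P_k=R_{\pi(k)}$.

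For surjectivity of $\pi$, fix $j\in J$ and approximate $R_j=\lim_m\sum_\ell c_\ell^{(m)}P_\ell=\lim_m\sum_\ell c_\ell^{(m)}R_{\pi(\ell)}$ in $\ldss$; if $j$ were not in $\pi(\cN)$, the continuous $j$-th coefficient functional of $\{R_{j'}\}_{j'\in J}$ applied to both sides would yield $1=0$. Therefore $\pi$ is a bijection, $\{P_k\}_{k\in\cN}=\{R_j\}_{j\in J}$, and the proposition follows. The main difficulty is the limiting step producing $R_{j_1}=cP_k$, which relies essentially on the joint continuity of multiplication on $\ldss$ and on the closedness of $\C P_k$.
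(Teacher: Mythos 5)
Your argument is correct, but note that on this point there is nothing in the paper to compare it with: the paper does not prove this proposition at all, it simply quotes it as \cite[Prop. 4.7]{Cias} (together with \cite[Lemma 4.4]{Cias}, which supplies the canonical Schauder basis). So what you have produced is a self-contained derivation of the quoted result from \cite[Lemma 4.4]{Cias}, and it holds up: the reduction $\alg(\{P_k\}_{k\in\cN})=\overline{\lin}(\{P_k\}_{k\in\cN})$ is right, the $0/1$ nature of the coefficients $\alpha_{k,\ell}$ follows as you say from $R_\ell P_k$ being a projection in the commutative algebra $E$, and the crux step is sound: if $R_{j_1}P_k=R_{j_1}$, then approximating $R_{j_1}$ by finite combinations $\sum_\ell c_\ell^{(m)}P_\ell$ and right-multiplying by $P_k$ gives $R_{j_1}\in\C P_k$ (a closed subspace), whence $R_{j_1}=P_k$ by the projection property. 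In effect you are showing that the compression $P_kEP_k$ equals $\C P_k$, i.e. that each $P_k$ is a minimal projection of $E$; your surjectivity argument via the continuous coefficient functionals of the canonical basis then identifies $\{P_k\}_{k\in\cN}$ with the set of all nonzero minimal projections, which is exactly what the statement asserts. Two cosmetic remarks: the finiteness of $J_k$ and the disjointness $J_k\cap J_\ell=\emptyset$ are not actually needed once the crux step forces $|J_k|=1$ (and disjointness follows more simply from $R_jP_k=R_j$, $R_jP_\ell=R_j$ and $P_kP_\ell=0$ than from coefficient functionals); and for surjectivity one can argue alternatively that some $R_jP_k\neq 0$ (otherwise $R_j=R_j^2=0$ by continuity), so $R_jP_k$ is a nonzero projection of $E$ dominated by $P_k$, hence equals $P_k$ by the minimality you already established, and then $P_k=R_j$ by minimality of $R_j$.
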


\begin{Th}\label{th_commutative_subalg}\cite[Th. 4.8]{Cias}
Let $E$ be a closed commutative infinite-dimensional ${}^*$-subalgebra of $\ldss$ and let $\{P_k\}_{k\in\N}$ be the canonical Schauder basis  
of $E$. Then
\[E=\alg(\{P_k\}_{k\in\N})\cong\lambda^\infty(||P_k||_q)\] 
as Fr\'echet ${}^*$-algebras and the isomorphism is given by $P_k\mapsto e_k$ for $k\in\N$. 
\end{Th}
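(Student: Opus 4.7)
The plan is to combine Proposition \ref{prop_P_n_basic_sequnce} with standard K\"othe-space theory for nuclear Fr\'echet spaces with a Schauder basis, and then to verify that the topological identification thus obtained respects the algebra and the involution.

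First I would settle the equality $E=\alg(\{P_k\}_{k\in\N})$: since by hypothesis $(P_k)_{k\in\N}$ is a Schauder basis of $E$, we have $E=\overline{\lin}(\{P_k\}_{k\in\N})\subseteq\alg(\{P_k\}_{k\in\N})$, while the reverse inclusion is immediate because $E$ is a closed ${}^*$-subalgebra containing every $P_k$. Next, since $\ldss\cong s$ as Fr\'echet spaces, $E$ is nuclear as a closed subspace of a nuclear space; combined with the fact that $(P_k)$ is a Schauder basis, \cite[Cor.~28.13]{MeV} provides the topological isomorphism
\[
\Phi\colon E\to\lambda^\infty(\|P_k\|_q),\qquad \sum_{k=1}^{\infty}c_kP_k\mapsto(c_k)_{k\in\N},
\]
which sends $P_k$ to $e_k$; the use of the $\lambda^\infty$-representation (rather than $\lambda^1$ or $\lambda^2$) is harmless here because nuclearity forces the various $\lambda^p$-spaces over the same K\"othe matrix to coincide.

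Finally, I would check that $\Phi$ is also a ${}^*$-algebra isomorphism. Self-adjointness of each $P_k$ together with continuity of the involution yields $\xi^*=\sum_k\overline{c_k}P_k$ for $\xi=\sum_k c_kP_k$, so $\Phi(\xi^*)=\overline{\Phi(\xi)}$. For multiplication, I would use the pairwise orthogonality $P_jP_k=\delta_{jk}P_k$ (coming from \cite[Lemma~4.4]{Cias}) together with joint continuity of the product on $\ldss$ to obtain
\[
\Bigl(\sum_{k=1}^{\infty}c_kP_k\Bigr)\Bigl(\sum_{l=1}^{\infty}d_lP_l\Bigr)=\sum_{k=1}^{\infty}c_kd_kP_k,
\]
so that $\Phi$ transports the product on $E$ to the pointwise product on $\lambda^\infty(\|P_k\|_q)$. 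The step that requires the most care is pinning down the precise K\"othe matrix $(\|P_k\|_q)_{k\in\N,q\in\N_0}$ rather than merely \emph{some} K\"othe representation; this is ultimately supplied by the explicit form of the basis isomorphism in \cite[Cor.~28.13]{MeV}, after which the algebraic verification reduces to a bookkeeping exercise with the orthogonality relations.
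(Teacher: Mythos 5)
Your proposal is correct and follows what is essentially the intended route: the paper itself does not reprove this statement (it is quoted from \cite[Th.~4.8]{Cias}), but your scheme --- $E=\overline{\lin}(\{P_k\})\subseteq\alg(\{P_k\})\subseteq E$, nuclearity making the Schauder basis of pairwise orthogonal projections absolute, the coefficient isomorphism of \cite[Cor.~28.13]{MeV} onto $\lambda^1(\|P_k\|_q)=\lambda^\infty(\|P_k\|_q)$ sending $P_k\mapsto e_k$, and the coordinatewise verification of product and involution via $P_jP_k=\delta_{jk}P_k$ --- is exactly the argument the paper runs for the analogous Proposition~\ref{prop_subalg_kothe_alg}. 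The only step you leave implicit is that $\|P_k\|_q\geq\|P_k\|_0=1$ (the $\ell_2$-operator norm of a nonzero orthogonal projection), which is what makes $\lambda^\infty(\|P_k\|_q)$ a Fr\'echet ${}^*$-algebra under pointwise operations; this is immediate, so no genuine gap remains.
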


Please note that a projection $P\in\ldss$ if and only if it is of the form
\[P\xi=\sum_{k\in I}\langle\xi,f_k\rangle f_k\]
for some finite set $I$ and an orthonormal sequence $(f_k)_{k\in I}\subset s$.

We will also use the identity
\begin{equation}\label{eq_|f_k|=||f_k||}
\lambda^\infty(||\langle\cdot,f_k\rangle f_k||_q)=\lambda^\infty(|f_k|_q) 
\end{equation}
which holds for every orthonormal sequence $(f_k)_{k\in\N}\subset s$.
(see \cite[Rem. 4.11]{Cias}).  

Now we are ready to state and prove the main result of this section. 

\begin{Th}\label{th_comm_subalg_f_n2}
Every closed commutative ${}^*$-subalgebra of $\ldss$ is isomorphic as a Fr\'echet \mbox{${}^*$-algebra} to some closed \mbox{${}^*$-subalgebra}
of the algebra $\lambda^\infty(|f_k|_q)$ for some orthonormal sequence $(f_k)_{k\in\N}\subset s$. 
More precisely, if $E$ is an infinite-dimensional closed commutative ${}^*$-subalgebra of $\ldss$ and 
$(\sum_{j\in\cN_k}\langle\cdot,f_j\rangle f_j)_{k\in\N}$ is its canonical Schauder basis for some family of finite pairwise disjoint subsets 
$(\cN_k)_{k\in\N}$ of natural numbers and an orthonormal sequence $(f_j)_{j\in\N}\subset s$, 
then $E$ is isomorphic as a Fr\'echet ${}^*$-algebra to the closed ${}^*$-subalgebra of $\lambda^\infty(|f_k|_q)$ generated by 
$\{\sum_{j\in\cN_k}e_j\}_{k\in\N}$ and the isomorphism is given by $\sum_{j\in\cN_k}\langle\cdot,f_j\rangle f_j\mapsto \sum_{j\in\cN_k}e_j$ for $k\in\N$.

Conversely, if $(f_k)_{k\in\N}\subset s$ is an orthonormal sequence, 
then every closed ${}^*$-sub\-algebra 
of $\lambda^\infty(|f_k|_q)$ is isomorphic as a Fr\'echet ${}^*$-algebra to some closed commutative ${}^*$-subalgebra of $\ldss$.
\end{Th}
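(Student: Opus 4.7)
My plan is to reduce both directions of the theorem to showing that the two matrices $(\|P_k\|_q)_{k,q}$ and $(\max_{j\in\cN_k}|f_j|_q)_{k,q}$ define equivalent K\"othe algebras (with the identity permutation), where $P_k:=\sum_{j\in\cN_k}\langle\cdot,f_j\rangle f_j$. Indeed, Theorem \ref{th_commutative_subalg} already identifies $E=\alg(\{P_k\}_{k\in\N})\cong\lambda^\infty(\|P_k\|_q)$ via $P_k\mapsto e_k$, while Proposition \ref{prop_subalg_kothe_alg}(iv) identifies the closed ${}^*$-subalgebra of $\lambda^\infty(|f_k|_q)$ generated by $\{\sum_{j\in\cN_k}e_j\}_{k\in\N}$ with $\lambda^\infty(\max_{j\in\cN_k}|f_j|_q)$ via $\sum_{j\in\cN_k}e_j\mapsto e_k$. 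Composing these identifications with the desired isomorphism reduces the task to verifying conditions $(\alpha)$ and $(\beta)$ of Proposition \ref{prop_isomorphisms_kothe_algebras} (with $\sigma=\mathrm{id}$).

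The inequality $\max_{j\in\cN_k}|f_j|_q\le\|P_k\|_q$ (condition $(\beta)$) is immediate: for any $j_0\in\cN_k$, testing the operator norm on $\eta:=f_{j_0}/|f_{j_0}|'_q$ gives $P_k\eta=f_{j_0}/|f_{j_0}|'_q$, hence $\|P_k\|_q\ge|f_{j_0}|_q/|f_{j_0}|'_q\ge|f_{j_0}|_q$, since $|f_{j_0}|'_q\le\|f_{j_0}\|_{\ell_2}=1$.

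The reverse inequality $\|P_k\|_q\le C\max_{j\in\cN_k}|f_j|_r$ (condition $(\alpha)$) is the main obstacle. I plan to combine three ingredients. The triangle inequality together with $|\langle\eta,f_j\rangle|\le|\eta|'_q|f_j|_q$ yields
\[\|P_k\|_q\le\sum_{j\in\cN_k}|f_j|_q^2\le|\cN_k|\max_{j\in\cN_k}|f_j|_q^2,\]
and the (DN) estimate of Proposition \ref{prop_l_2_DN_in_s}, together with $\|f_j\|_{\ell_2}=1$, then upgrades this to $\|P_k\|_q\le|\cN_k|\max_{j\in\cN_k}|f_j|_{2q}$. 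The delicate point is disposing of the factor $|\cN_k|$, and for this I will show that for every $q_0\ge 1$,
\[\sum_j\frac{1}{|f_j|_{q_0}^2}\le\sum_j(|f_j|'_{q_0})^2=\sum_{l=1}^\infty l^{-2q_0}\sum_j|\langle f_j,e_l\rangle|^2\le\sum_{l=1}^\infty l^{-2q_0}<\infty,\]
where the first inequality uses the Cauchy--Schwarz bound $1=\|f_j\|_{\ell_2}^2\le|f_j|_{q_0}|f_j|'_{q_0}$ and the last step uses Bessel's inequality for the orthonormal sequence $(f_j)$ in $\ell_2$. This forces $|\cN_k|\le C(\max_{j\in\cN_k}|f_j|_{q_0})^2$ uniformly in $k$. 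Substituting back and iterating the (DN) inequality $|f_j|_s^2\le|f_j|_{2s}$ a couple of times to absorb the resulting powers into a single seminorm $|f_j|_r$ produces the required bound.

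For the converse direction, given a closed ${}^*$-subalgebra $F$ of $\lambda^\infty(|f_k|_q)$, Proposition \ref{prop_subalg_kothe_alg} furnishes a family $\{\cN_k\}$ of finite, nonempty, pairwise disjoint subsets of $\N$ such that $(\sum_{j\in\cN_k}e_j)$ is its canonical Schauder basis (the finite-dimensional case is handled analogously with finitely many $\cN_k$). Setting $P_k:=\sum_{j\in\cN_k}\langle\cdot,f_j\rangle f_j\in\ldss$ produces a sequence of pairwise orthogonal, self-adjoint, finite-rank projections, so by Proposition \ref{prop_P_n_basic_sequnce} the algebra $E':=\alg(\{P_k\})$ is a closed commutative ${}^*$-subalgebra of $\ldss$ with $(P_k)$ as its canonical Schauder basis. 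The forward direction, applied to $E'$ with the same family $(\cN_k)$ and orthonormal sequence $(f_j)$, then yields $E'\cong F$ as Fr\'echet ${}^*$-algebras, completing the proof.
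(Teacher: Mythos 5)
Your proposal is correct, but it takes a genuinely different route from the paper. The paper never compares $||P_k||_q$ with $\max_{j\in\cN_k}|f_j|_q$ at all: it embeds $E$ into the larger algebra $\alg(\{\langle\cdot,f_k\rangle f_k\}_{k\in\cN})$, $\cN=\bigcup_k\cN_k$, generated by the \emph{rank-one} projections, uses Proposition \ref{prop_P_n_basic_sequnce}, Theorem \ref{th_commutative_subalg} and the rank-one identity (\ref{eq_|f_k|=||f_k||}) to obtain a Fr\'echet ${}^*$-isomorphism $\Phi$ of that algebra onto $\lambda^\infty(|f_k|_q)$ with $\langle\cdot,f_k\rangle f_k\mapsto e_k$, and then verifies $\Phi(E)=F$ by a closed-linear-span argument; the converse is likewise soft, since every closed ${}^*$-subalgebra of $\lambda^\infty(|f_k|_q)\cong\alg(\{\langle\cdot,f_k\rangle f_k\}_{k\in\N})$ is automatically isomorphic to a closed commutative ${}^*$-subalgebra of $\ldss$, with no appeal to Proposition \ref{prop_subalg_kothe_alg}. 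You instead prove the two-sided estimate $\max_{j\in\cN_k}|f_j|_q\le||P_k||_q\le C\max_{j\in\cN_k}|f_j|_r$ directly, and the estimates check out: the test-vector bound, the bound $||P_k||_q\le\sum_{j\in\cN_k}|f_j|_q^2$, the Bessel trick giving $|\cN_k|\le C(\max_{j\in\cN_k}|f_j|_{q_0})^2$, and the absorption of powers via Proposition \ref{prop_l_2_DN_in_s} and \ref{prop_l_2_DN_in_s2} (legitimate since all norms involved are $\ge1$). In effect you prove the content of Corollary \ref{cor_comm_subalg_max|f_j|2} first and deduce the theorem from it, reversing the paper's logical order; this buys explicit quantitative control of the norms of the higher-rank projections, which the paper's softer argument avoids needing. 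One point you should make explicit: applying Proposition \ref{prop_subalg_kothe_alg} to $\lambda^\infty(|f_k|_q)$ (in both directions) requires that this K\"othe space be nuclear and contained in $c_0$; both facts follow easily from your own summability estimate (e.g. $\sum_k|f_k|_q/|f_k|_{4\max(q,1)}\le\sum_k|f_k|_{\max(q,1)}^{-2}<\infty$ gives the Grothendieck--Pietsch condition, and $|f_k|_1\to\infty$ gives the inclusion in $c_0$), or from the paper's identification $\lambda^\infty(|f_k|_q)\cong\alg(\{\langle\cdot,f_k\rangle f_k\}_{k\in\N})$, but they are hypotheses of that proposition, not automatic.
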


\bproof
By Theorem \ref{th_commutative_subalg}, $E=\alg\bigg(\bigg\{\sum_{j\in\cN_k}\langle\cdot,f_j\rangle f_j\bigg\}_{k\in\N}\bigg)$ for $(\cN_k)_{k\in\N}$ and $(f_j)_{j\in\N}\subset s$ as in the statement.
Let $F$ be the closed ${}^*$-subalgebra of $\lambda^\infty(|f_k|_q)$ generated by $\{\sum_{j\in\cN_k}e_j\}_{k\in\N}$.
Define 
\[\Phi\colon\alg(\{\langle\cdot,f_k\rangle f_k\}_{k\in\cN})\to\lambda^\infty(|f_k|_q)\] 
by $\langle\cdot,f_k\rangle f_k\mapsto e_k$, where $\cN:=\bigcup_{k\in\N}\cN_k$. By Proposition \ref{prop_P_n_basic_sequnce}, $\{\langle\cdot,f_k\rangle f_k\}_{k\in\cN}$ is the canonical 
Schauder basis of $\alg(\{\langle\cdot,f_k\rangle f_k\}_{k\in\cN})$, and thus Theroem \ref{th_commutative_subalg} and (\ref{eq_|f_k|=||f_k||}) imply that $\Phi$ is a Fr\'echet ${}^*$-algebra isomorphism. 
Hence, $(\sum_{j\in\cN_k}e_j)_{k\in\N}=(\Phi(\sum_{j\in\cN_k}\langle\cdot,f_j\rangle f_j))_{k\in\N}$ is a Schauder basis of $\Phi(E)$ and $\Phi(E)$ is a closed ${}^*$-subalgebra of $\lambda^\infty(|f_k|_q)$. 
Therefore,
\[\Phi(E)=\overline{\lin}\bigg(\bigg\{\sum_{j\in\cN_k}e_j\bigg\}_{k\in\N}\bigg)\subset F\subset\Phi(E),\]
whence $\Phi(E)=F$. In consequence $\Phi_{\mid E}$ is a Fr\'echet ${}^*$-algebra isomorphism of $E$ and $F$, which completes the proof of the first statement.

If now $(f_k)_{k\in\N}\subset s$ is an arbitrary orthonormal sequence then, according to Proposition \ref{prop_P_n_basic_sequnce}, 
Theorem \ref{th_commutative_subalg} and indentity (\ref{eq_|f_k|=||f_k||}), $\lambda^\infty(|f_k|_q)\cong\alg(\{\langle\cdot,f_k\rangle f_k\}_{k\in\N})$ as Fr\'echet ${}^*$-algebras. 
Consequently, every closed ${}^*$-subalgebra of $\lambda^\infty(|f_k|_q)$ is isomorphic as a Fr\'echet ${}^*$-algebra to some closed 
${}^*$-subalgebra of $\alg(\{\langle\cdot,f_k\rangle f_k\}_{k\in\N})$.
\bqed

The following characterization of infinite-dimensional closed commutative ${}^*$-subalgebras of $\ldss$ is a straightforward consequence
of Proposition \ref{prop_subalg_kothe_alg} and Theorem \ref{th_comm_subalg_f_n2}.
\begin{Cor}\label{cor_comm_subalg_max|f_j|2}
Every infinite-dimensional closed commutative ${}^*$-subalgebra of $\ldss$ is isomorphic as a Fr\'echet ${}^*$-algebra to the algebra 
$\lambda^\infty(\max_{j\in\cN_k}|f_j|_q)$ for some orthonormal sequence $(f_k)_{k\in\N}\subset s$ and some family $\{\cN_k\}_{k\in\N}$ of finite 
nonempty pairwise disjoint sets of natural numbers. 
In fact, if $E$ is an infinite-dimensional closed commutative ${}^*$-subalgebra of $\ldss$ and $(\sum_{j\in\cN_k}\langle\cdot,f_j\rangle f_j)_{k\in\N}$ 
is its canonical Schauder basis, then 
\[E\cong\lambda^\infty\bigg(\max_{j\in\cN_k}|f_j|_q\bigg)\]
as Fr\'echet ${}^*$-algebras and the isomorphism is given by $\sum_{j\in\cN_k}\langle\cdot,f_j\rangle f_j\mapsto e_k$ for $k\in\N$.

Conversely, if $(f_k)_{k\in\N}\subset s$ is an orthonormal sequence and $\{\cN_k\}_{k\in\N}$ is a family of finite 
nonempty pairwise disjoint sets of natural numbers, 
then $\lambda^\infty(\max_{j\in\cN_k}|f_j|_q)$ is isomorphic as a Fr\'echet ${}^*$-algebra to some infinite-dimensional closed commutative 
${}^*$-subalgebra of $\ldss$. 
\end{Cor}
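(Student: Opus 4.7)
The plan is to combine Theorem \ref{th_comm_subalg_f_n2}, which represents any closed commutative ${}^*$-subalgebra $E$ of $\ldss$ as a closed ${}^*$-subalgebra of $\lambda^\infty(|f_k|_q)$ generated by the idempotents $\{e_{\cN_k}\}_{k\in\N}$ (here $e_{\cN_k}=\sum_{j\in\cN_k}e_j$), with Proposition \ref{prop_subalg_kothe_alg}(iii)--(iv), which identifies precisely such a subalgebra with $\lambda^\infty(\max_{j\in\cN_k}|f_j|_q)$. Before composing the two isomorphisms, I would verify that the K\"othe matrix $(|f_j|_q)_{j\in\N,q\in\N_0}$ satisfies the standing hypotheses of Proposition \ref{prop_subalg_kothe_alg}. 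Nuclearity of $\lambda^\infty(|f_j|_q)$ follows from Theorem \ref{th_commutative_subalg} and identity (\ref{eq_|f_k|=||f_k||}), since this space is then Fr\'echet ${}^*$-isomorphic to a closed ${}^*$-subalgebra of $\ldss\cong s$. The inclusion $\lambda^\infty(|f_j|_q)\subset c_0$ reduces to showing that $|f_j|_q$ is unbounded in $j$ for some $q$; if not, the orthonormal sequence $(f_j)$ would be bounded in the nuclear Fr\'echet space $s$, hence relatively compact there, contradicting the fact that $\|f_i-f_j\|_{\ell_2}=\sqrt{2}$ for $i\neq j$.

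For the forward direction, I would take an infinite-dimensional closed commutative ${}^*$-sub\-algebra $E\subset\ldss$ with canonical Schauder basis $P_k=\sum_{j\in\cN_k}\langle\cdot,f_j\rangle f_j$ (in the form supplied by Theorem \ref{th_comm_subalg_f_n2}). That theorem produces a Fr\'echet ${}^*$-algebra isomorphism $\Phi_1\colon E\to F$, where $F$ is the closed ${}^*$-subalgebra of $\lambda^\infty(|f_k|_q)$ generated by $\{e_{\cN_k}\}_{k\in\N}$, and $\Phi_1(P_k)=e_{\cN_k}$. Applying Proposition \ref{prop_subalg_kothe_alg}(iii)--(iv) to the matrix $(|f_j|_q)$ then yields a Fr\'echet ${}^*$-algebra isomorphism $\Phi_2\colon F\to\lambda^\infty(\max_{j\in\cN_k}|f_j|_q)$ with $\Phi_2(e_{\cN_k})=e_k$. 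The composition $\Phi_2\circ\Phi_1$ is the required isomorphism sending $P_k$ to $e_k$.

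For the converse, I would run the same two ingredients in reverse. Given an orthonormal $(f_k)\subset s$ and a family $\{\cN_k\}_{k\in\N}$ of pairwise disjoint finite nonempty subsets of $\N$, Proposition \ref{prop_subalg_kothe_alg}(iii)--(iv) gives a Fr\'echet ${}^*$-algebra isomorphism between $\lambda^\infty(\max_{j\in\cN_k}|f_j|_q)$ and the closed ${}^*$-subalgebra of $\lambda^\infty(|f_k|_q)$ generated by $\{e_{\cN_k}\}_{k\in\N}$. The converse part of Theorem \ref{th_comm_subalg_f_n2} then realises this latter subalgebra as a closed commutative ${}^*$-subalgebra of $\ldss$. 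The whole argument is essentially bookkeeping; the one step demanding genuine care, and the main potential obstacle, is the preliminary verification that $(|f_j|_q)$ meets the nuclearity and $c_0$-inclusion hypotheses of Proposition \ref{prop_subalg_kothe_alg}, without which the key reduction step cannot be invoked.
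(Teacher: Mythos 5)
Your overall route is exactly the paper's: the corollary is obtained there as an immediate combination of Theorem \ref{th_comm_subalg_f_n2} (realising $E$, via its canonical basis, as the closed ${}^*$-subalgebra of $\lambda^\infty(|f_k|_q)$ generated by the idempotents $e_{\cN_k}$, and conversely) with Proposition \ref{prop_subalg_kothe_alg}(iii)--(iv) applied to the matrix $(|f_j|_q)$, and your forward and converse arguments are precisely this composition. Your extra care in checking the standing hypotheses of Proposition \ref{prop_subalg_kothe_alg} is welcome (the paper leaves it implicit), and the nuclearity check via Theorem \ref{th_commutative_subalg} and identity (\ref{eq_|f_k|=||f_k||}) is fine.

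One step of that verification is, however, stated incorrectly: the inclusion $\lambda^\infty(|f_j|_q)\subset c_0$ does \emph{not} reduce to unboundedness of $(|f_j|_q)_{j\in\N}$ for some $q$. If $|f_j|_q$ were, say, bounded along a subsequence and huge off it, the matrix could be unbounded in $j$ while $\lambda^\infty(|f_j|_q)$ still contained sequences not tending to $0$; what you actually need is $|f_j|_q\to\infty$ for some $q$ (equivalently, that each set $\{j:|f_j|_q\leq C\}$ is finite). Your compactness idea does give this after a small repair: for fixed $q\geq1$ the set $\{\xi\in\ell_2:|\xi|_q\leq C\}$ is relatively compact in $\ell_2$ (it is the image of a bounded set under the compact diagonal operator with entries $j^{-q}$), so it can contain only finitely many members of an orthonormal sequence, since $\|f_i-f_j\|_{\ell_2}=\sqrt2$ for $i\neq j$; hence $|f_j|_q\to\infty$ and $\lambda^\infty(|f_j|_q)\subset c_0$. (Alternatively, argue directly: if $\xi\in\lambda^\infty(|f_j|_q)$ had $|\xi_{j_k}|\geq\epsilon$ along a subsequence, then $(f_{j_k})$ would be bounded in $s$, hence relatively compact in $\ell_2$, the same contradiction.) With this correction your proof is complete and coincides with the paper's intended argument.
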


At the end of this section we consider the case of maximal commutative subalgebras of $\ldss$.
A closed commutative ${}^*$-subalgebra of $\ldss$ is said to be \emph{maximal commutative}
if it is not properly contained in any larger closed commutative \mbox{${}^*$-subalgebra} of $\ldss$.

We say that an orthonormal system $(f_k)_{k\in\N}$ of $\ell_2$ is \emph{$s$-complete}\index{$s$-complete sequence}, if every $f_k$ belongs to $s$ 
and for every $\xi\in s$ the following implication holds: if $\langle \xi,f_k\rangle=0$ for every $k\in\N$, then $\xi=0$. 
A sequence $\{P_k\}_{k\in\N}$ of nonzero pairwise orthogonal projections belonging to $\ldss$ 
is called $\ldss$-\emph{complete} if there is no nonzero projection $P$ belonging to $\ldss$ such that $P_kP=0$ 
for every $k\in\N$.

One can easily show that an orthonormal 
system $(f_k)_{k\in\N}$ is $s$-complete if and only if the sequence of projections $(\langle\cdot,f_k\rangle f_k)_{k\in\N}$ is $\ldss$-complete.
Hence, by \cite[Th. 4.10]{Cias}, closed commutative ${}^*$-subalgebra $E$ of $\ldss$ is maximal commutative if and only if there is an $s$-complete sequence $(f_k)_{k\in\N}$ such that 
$(\langle\cdot,f_k\rangle f_k)_{k\in\N}$ is the canonical Schauder basis of $E$. Combining this with Corollary \ref{cor_comm_subalg_max|f_j|2}, we obtain the first statement of the 
following Corollary.

\begin{Cor}\label{cor_max_comm_subalg_f_n2}
Every closed maximal commutative ${}^*$-subalgebra of $\ldss$ is isomorphic as a Fr\'echet ${}^*$-algebra to the algebra 
$\lambda^\infty(|f_k|_q)$ for some $s$-complete orthonormal sequence $(f_k)_{k\in\N}$. 
More precisely, if $E$ is a closed maximal commutative ${}^*$-subalgebra of $\ldss$ with the canonical Schauder basis 
$(\langle\cdot,f_k\rangle f_k)_{k\in\N}$, then 
\[E\cong\lambda^\infty(|f_k|_q)\]
as Fr\'echet ${}^*$-algebras and the isomorphism is given by $\langle\cdot,f_k\rangle f_k\mapsto e_k$ for $k\in\N$.

Conversely, if $(f_k)_{k\in\N}$ is an $s$-complete orthonormal sequence, 
then $\lambda^\infty(|f_k|_q)$ is isomorphic as a Fr\'echet ${}^*$-algebra to some closed maximal commutative ${}^*$-subalgebra of $\ldss$. 

\end{Cor}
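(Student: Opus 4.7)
The first half follows precisely the combination indicated in the sentence just before the statement. Starting with a closed maximal commutative ${}^*$-subalgebra $E\subseteq\ldss$, I would apply the cited result \cite[Th.~4.10]{Cias} to extract an $s$-complete orthonormal system $(f_k)_{k\in\N}\subset s$ whose associated rank-one projections $(\langle\cdot,f_k\rangle f_k)_{k\in\N}$ form the canonical Schauder basis of $E$. This is exactly the input of Corollary~\ref{cor_comm_subalg_max|f_j|2} with each block $\cN_k$ being the singleton $\{k\}$, so that $\max_{j\in\cN_k}|f_j|_q=|f_k|_q$. Reading Corollary~\ref{cor_comm_subalg_max|f_j|2} in that special case gives the isomorphism $E\cong\lambda^\infty(|f_k|_q)$ with the prescribed assignment $\langle\cdot,f_k\rangle f_k\mapsto e_k$.

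For the converse, my plan is to exhibit the witnessing subalgebra explicitly: given an $s$-complete orthonormal sequence $(f_k)_{k\in\N}\subset s$, I would set
\[E:=\alg\bigl(\{\langle\cdot,f_k\rangle f_k\}_{k\in\N}\bigr)\subseteq\ldss.\]
By Proposition~\ref{prop_P_n_basic_sequnce} the sequence $(\langle\cdot,f_k\rangle f_k)_{k\in\N}$ is automatically the canonical Schauder basis of $E$, so Theorem~\ref{th_commutative_subalg} applied together with identity (\ref{eq_|f_k|=||f_k||}) yields directly the Fr\'echet ${}^*$-algebra isomorphism $E\cong\lambda^\infty(|f_k|_q)$ mapping $\langle\cdot,f_k\rangle f_k$ to $e_k$.

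The one genuine verification is that this $E$ is actually \emph{maximal} commutative, and this is the step I expect to be the only real content of the proof. Here I would invoke the equivalence recorded just before the corollary: $s$-completeness of $(f_k)$ is equivalent to $\ldss$-completeness of the projection sequence $(\langle\cdot,f_k\rangle f_k)_{k\in\N}$. Feeding this into \cite[Th.~4.10]{Cias} in the other direction forces $E$ to be maximal commutative. I do not anticipate a hidden obstacle; the corollary is essentially a bookkeeping consequence of the machinery already built in Sections~\ref{sec_kothe_algebras} and~\ref{sec_kothe_repr}, and the only pitfall is to remember that maximality of a commutative subalgebra corresponds precisely to singleton blocks $\cN_k$, i.e.\ to canonical projections of rank one rather than of arbitrary finite rank.
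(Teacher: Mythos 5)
Your proposal is correct and follows essentially the same route as the paper: the first half combines \cite[Th.~4.10]{Cias} (via the $s$-complete $\Leftrightarrow$ $\ldss$-complete equivalence) with Corollary~\ref{cor_comm_subalg_max|f_j|2} specialized to singleton blocks, and the converse takes $E=\alg(\{\langle\cdot,f_k\rangle f_k\}_{k\in\N})$, gets the isomorphism from Proposition~\ref{prop_P_n_basic_sequnce}, Theorem~\ref{th_commutative_subalg} and (\ref{eq_|f_k|=||f_k||}), and deduces maximality from the same equivalence — exactly the paper's argument. Only your closing remark is slightly loose: rank-one canonical projections alone do not give maximality; the $s$-completeness (equivalently $\ldss$-completeness) of the family is also needed, which your main argument does use correctly.
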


\bproof
In order to prove the second statement, take an arbitrary $s$-complete orthonormal sequence $(f_k)_{k\in\N}$. 
By Proposition \ref{prop_P_n_basic_sequnce} and the remark above our Corollary, $\alg(\{\langle\cdot, f_k\rangle f_k\}_{k\in\N})$ is maximal commutative and from the first statement it follows that it is
isomorphic as a Fr\'echet ${}^*$-algebra to $\lambda^\infty(|f_k|_q)$.   
\bqed

It is also worth pointing out the following result.

\begin{Prop}\label{prop_alg_extended_to_maximal}
Every closed commutative ${}^*$-subalgebra of $\ldss$ is contained in some maximal commutative ${}^*$-subalgebra of $\ldss$. 
\end{Prop}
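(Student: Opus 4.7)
I would prove this by a straightforward Zorn's lemma argument. Let $\mathcal{F}$ denote the family of all closed commutative ${}^*$-subalgebras of $\ldss$ containing $E$, partially ordered by inclusion. This family is nonempty since $E\in\mathcal{F}$.

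The main task is to show every chain $(E_\alpha)_{\alpha\in A}$ in $\mathcal{F}$ has an upper bound. The natural candidate is $F:=\overline{\bigcup_{\alpha\in A}E_\alpha}$. Since any two elements of $\bigcup_\alpha E_\alpha$ lie in a common $E_\alpha$ (by the chain property), the union is a ${}^*$-subalgebra of $\ldss$ containing $E$. Using that multiplication on $\ldss$ is jointly continuous and the involution is continuous (both noted in the preliminaries), standard arguments show $F$ is a closed ${}^*$-subalgebra of $\ldss$.

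The crucial point is commutativity of $F$. Given $x,y\in F$, choose sequences $x_n,y_n\in\bigcup_\alpha E_\alpha$ with $x_n\to x$ and $y_n\to y$ (here we use that $\ldss$ is metrizable). For each $n$, there exist indices $\alpha_n,\beta_n\in A$ with $x_n\in E_{\alpha_n}$ and $y_n\in E_{\beta_n}$; by the chain property one of these subalgebras contains the other, so $x_n$ and $y_n$ both lie in a single commutative ${}^*$-subalgebra, whence $x_ny_n=y_nx_n$. Passing to the limit via joint continuity of the multiplication yields $xy=yx$. Therefore $F\in\mathcal{F}$ is an upper bound for the chain.

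By Zorn's lemma, $\mathcal{F}$ contains a maximal element $M$. Any closed commutative ${}^*$-subalgebra of $\ldss$ properly containing $M$ would automatically contain $E$ and so belong to $\mathcal{F}$, contradicting the maximality of $M$ in $\mathcal{F}$. Thus $M$ is maximal commutative in $\ldss$ and contains $E$, as required. I do not foresee any real obstacle: the only step worth verifying carefully is the commutativity of $F$, which reduces to the already-recorded joint continuity of the product on $\ldss$.
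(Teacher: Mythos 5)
Your proof is correct and is essentially the paper's argument: a Zorn's lemma application to commutative ${}^*$-subalgebras containing $E$, with continuity of the operations handling closedness. The only (immaterial) difference is that the paper runs Zorn over not-necessarily-closed commutative ${}^*$-subalgebras, so chain unions are upper bounds without taking closures, and only passes to $\overline{M}^{\ldss}$ at the very end, whereas you work with closed subalgebras throughout and close the union of each chain.
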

\bproof
Let $E$ be a closed commutative ${}^*$-subalgebra of $\ldss$.
Clearly, 
\[\cX:=\{\widetilde{E}: \widetilde{E} \text{ commutative ${}^*$-subalgebra of $\ldss$ and $E\subset\widetilde{E}$}\}\] 
with the inclusion relation is a partially ordered set. Consider
a chain $\cC$ in $\cX$ and let $E_{\cC}:=\bigcup_{F\in\cC}F$. It is easy to check that $E_{\cC}\in\cX$, and, of course, 
$E_{\cC}$ is an upper bound of $\cC$. Hence, by the Kuratowski-Zorn lemma, $\cX$ has a maximal element; let us call it $M$. 
By the continuity of the algebra operations, $\overline{M}^{\ldss}$ is a closed commutative ${}^*$-subalgebra of $\ldss$, hence from the maximality of $M$, 
we have $M=\overline{M}^{\ldss}$, i.e. $M$ is a (closed) maximal commutative ${}^*$-subalgebra of $\ldss$ containing $E$. 
\bqed

\section{Closed commutative ${}^*$-subalgebras of $\ldss$ with the property ($\Omega$)}\label{sec_omega}
In the present section we prove that a closed commutative ${}^*$-subalgebra of $\ldss$ is isomorphic as a Fr\'echet ${}^*$-algebra to
some closed ${}^*$-subalgebra of $s$ if and only if it is isomorphic as a Fr\'echet space to some complemented subspace of $s$ 
(Theorem \ref{th_complemented_subalg}), i.e. if it has the so-called property ($\Omega$) (see Definition \ref{def_Omega} below).
We also give an example of a closed commutative ${}^*$-subalgebra of $\ldss$ which is not isomorphic to any closed ${}^*$-subalgebra of $s$
(Theorem \ref{th_counterexample}).

\begin{Def}\label{def_Omega}\index{property!($\Omega$)}
A Fr\'echet space $E$ with a fundamental sequence $(||\cdot||_q)_{q\in\N_0}$ of seminorms has the \emph{property} ($\Omega$) if the following condition holds:
\[\forall p\;\exists q\;\forall r\;\exists\theta\in(0,1)\;\exists C>0\;\forall y\in E'\quad ||y||_q'\leq C||y||_p'^{1-\theta}||y||_r'^{\theta},\]
where $E'$ is the topological dual of $E$ and $||y||_p':=\sup\{|y(x)|:||x||_p\leq 1\}$. 
\end{Def}
The property ($\Omega$) (together with the property (DN)) plays a crucial role in the theory of nuclear Fr\'echet spaces 
(for details, see \cite[Ch. 29]{MeV}).

Recall that a subspace $F$ of a Fr\'echet space $E$ is called \emph{complemented} (in $E$) if there is a continuous projection 
$\pi\colon E\to E$ with $\im\pi=F$.
Since every subspace of $\ldss$ has the property (DN) (and, by \cite[Prop. 3.2]{Cias}, the norm $||\cdot||_{\ell_2\to\ell_2}$ 
is already a dominating norm), \cite[Prop. 31.7]{MeV} implies that a closed ${}^*$-subalgebra of $\ldss$ is isomorphic to 
a complemented subspace of $s$ if and only if it has the property ($\Omega$). 
The class of complemented subspaces of $s$ is still not well-understood (e.g. we do not know whether every such subspace has a Schauder basis -- the Pe{\l}czy{\'n}ski problem)
and, on the other hand, the class of closed ${}^*$-subalgebras of $s$ has a simple description (see Corollary \ref{cor_subalg_s2}). The following theorem implies that, when restricting to the
family of closed commutative ${}^*$-subalgebras of $\ldss$, these two classes of Fr\'echet spaces coincide.

\begin{Th}\label{th_complemented_subalg}
Let $E$ be an infinite-dimensional closed commutative ${}^*$-subalgebra of $\ldss$ and let 
$(\sum_{j\in\cN_k}\langle\cdot,f_j\rangle f_j)_{k\in\N}$ be its canonical Schauder basis. Then the following assertions are equivalent:
\begin{enumerate}[\upshape(i)]
 \item $E$ is isomorphic as a Fr\'echet ${}^*$-algebra to some closed ${}^*$-subalgebra of $s$;
 \item $E$ is isomorphic as a Fr\'echet space to some complemented subspace of $s$;
 \item $E$ has the property \emph{($\Omega$)};
 \item $\exists p\;\forall q\;\exists r\;\exists C>0\;\forall k\quad\max_{j\in\cN_k}|f_j|_q\leq C\max_{j\in\cN_k}|f_j|_p^r$.
\end{enumerate}
\end{Th}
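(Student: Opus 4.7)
The natural approach is the cycle (i) $\Rightarrow$ (ii) $\Rightarrow$ (iii) $\Rightarrow$ (iv) $\Rightarrow$ (i). The first two implications are essentially free: (i) $\Rightarrow$ (ii) is the ``complemented'' statement of Corollary \ref{cor_subalg_s2}, while (ii) $\Rightarrow$ (iii) is standard, since $s$ has ($\Omega$) and this property descends to quotients, in particular to complemented subspaces. (The paragraph preceding the theorem already records the equivalence of (ii) and (iii) for closed subspaces of $\ldss$ via Vogt's theorem.)

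For (iii) $\Rightarrow$ (iv) I pass through the K\"othe representation of Corollary \ref{cor_comm_subalg_max|f_j|2}: $E \cong \lambda^\infty(b_{k,q})$ as Fr\'echet ${}^*$-algebras, with $b_{k,q} := \max_{j \in \cN_k} |f_j|_q$, the canonical basis corresponding to $(e_k)$. A direct computation in the dual pairing shows that testing the defining inequality of ($\Omega$) on the basis vector $y = e_k$ yields the pointwise estimate
\[
b_{k,p}^{1-\theta}\,b_{k,r}^\theta \leq C\,b_{k,q} \qquad (k \in \N).
\]
Orthonormality of $(f_j)$ gives $b_{k,p} \geq \max_j ||f_j||_{\ell_2} = 1$. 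Fixing some $p_0$ with associated $q_0$ (in the sense of ($\Omega$)) and applying ($\Omega$) with its ``$r$'' playing the role of the target $q$ in (iv), I obtain $b_{k,p_0}^{1-\theta} b_{k,q}^\theta \leq C b_{k,q_0}$; since $b_{k,p_0} \geq 1$ this reduces to $b_{k,q} \leq C^{1/\theta} b_{k,q_0}^{1/\theta}$, and since $b_{k,q_0} \geq 1$ the real exponent $1/\theta$ may be replaced by $r := \lceil 1/\theta \rceil \in \N$, producing (iv) with $p := q_0$.

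The implication (iv) $\Rightarrow$ (i) is the main obstacle: I must actually manufacture a strictly increasing $(n_k) \subset \N$ with $\lambda^\infty(b_{k,q}) \cong \lambda^\infty(n_k^q)$, then invoke Corollary \ref{cor_subalg_s2}. Two preparatory facts drive the construction. First, nuclearity of $\lambda^\infty(b_{k,q}) \cong E \subseteq \ldss$, together with $b_{k,0} = 1$ and (iv), yields $\sum_k 1/b_{k,p}^{r_1} < \infty$ for some $r_1 \in \N$ via the Grothendieck--Pietsch criterion; in particular $b_{k,p} \to \infty$, a bijection $\pi$ can be chosen making $c_k := b_{\pi(k),p}$ non-decreasing, and the standard monotone tail estimate $k/c_k^{r_1} \leq \sum_{j \leq k} 1/c_j^{r_1}$ gives the polynomial lower bound $k \leq C' c_k^{r_1}$. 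Second, Proposition \ref{prop_l_2_DN_in_s} applied to the orthonormal $f_j$ gives $|f_j|_p^2 \leq |f_j|_{2p}$; iterating and taking maxima produces $b_{k,p}^{2^m} \leq b_{k,2^m p}$, so every natural power of $b_{k,p}$ is dominated uniformly in $k$ by $b_{k,q}$ for $q$ sufficiently large.

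With these in hand I define $n_1 := \lceil c_1 \rceil$ and $n_k := \max(\lceil c_k \rceil, n_{k-1}+1)$, obtaining a strictly increasing sequence in $\N$. An easy induction shows $c_k \leq n_k \leq c_k+k+1$, which (using $c_k \geq 1$ and $k \leq C' c_k^{r_1}$) sharpens to $c_k \leq n_k \leq C'' c_k^{r_1}$. Condition ($\alpha$) of Proposition \ref{prop_isomorphisms_kothe_algebras} for the matrices $(b_{\pi(k),q})$ and $(n_k^q)$ then follows from $n_k \geq c_k$ combined with (iv), and ($\beta$) follows from $n_k \leq C'' c_k^{r_1}$ combined with the iterated (DN)-inequality. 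The delicate step --- where I expect to spend the most time --- is this simultaneous fulfillment of ($\alpha$) and ($\beta$) by a strictly increasing natural sequence, and it hinges on the polynomial lower bound on $c_k$ extracted from nuclearity.
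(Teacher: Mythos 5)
Your argument is correct and follows the paper's overall cycle (i)$\Rightarrow$(ii)$\Rightarrow$(iii)$\Rightarrow$(iv)$\Rightarrow$(i) through the K\"othe representation of Corollary \ref{cor_comm_subalg_max|f_j|2}, but it replaces the paper's two technical ingredients by more elementary ones. For (iii)$\Rightarrow$(iv) the paper invokes Vogt's characterization of ($\Omega$) for K\"othe spaces $\lambda^1(A)$ \cite[Prop. 5.3]{V4}, whereas you test the defining inequality of ($\Omega$) directly on the coordinate functionals, whose dual norms are $1/b_{k,q}$, and exploit $b_{k,p}\geq 1$; this is a legitimate shortcut (you only need a necessary condition), provided you note the standard fact that ($\Omega$) does not depend on the choice of the fundamental system of seminorms, so it may be checked on the K\"othe gradation of $\lambda^\infty(b_{k,q})$ --- the paper uses the same invariance tacitly. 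For (iv)$\Rightarrow$(i) the paper passes to a level $p_1\geq p$ where Lemma \ref{lem_f_k_nuclearity} gives the linear bound $c_k\geq 2k$ and then needs the rather delicate Lemma \ref{lem_a_k} to squeeze a strictly increasing integer sequence between $c_k/C$ and $Cc_k^2$; you instead upgrade the Grothendieck--Pietsch summability by (iv) to $\sum_k c_k^{-r_1}<\infty$ at the level $p$ itself, extract only the polynomial bound $k\leq C'c_k^{r_1}$, and take the naive choice $n_k:=\max(\lceil c_k\rceil,n_{k-1}+1)$, which satisfies $c_k\leq n_k\leq C''c_k^{r_1}$. The worse exponent is harmless because, exactly as in the paper, the iterated (DN) inequality (Proposition \ref{prop_l_2_DN_in_s2}) absorbs any fixed power when verifying condition ($\beta$) of Proposition \ref{prop_isomorphisms_kothe_algebras}/\ref{prop_characterization_commutative*-subalgebras}, while ($\alpha$) follows from (iv) and $n_k\geq c_k$; the conclusion via Corollary \ref{cor_subalg_s2} is then the same. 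So your route buys a self-contained proof of (iii)$\Rightarrow$(iv) and dispenses with Lemma \ref{lem_a_k}, at the price of a slightly cruder comparison between $n_k$ and $c_k$, which the structure of the problem tolerates.
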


In order to prove Theorem \ref{th_complemented_subalg}, we will need Lemmas \ref{lem_f_k_nuclearity}, \ref{lem_a_k} and Propositions 
\ref{prop_characterization_commutative*-subalgebras}, \ref{prop_l_2_DN_in_s2}.

The following result is a consequence of nuclearity of closed commutative ${}^*$-subalgebras of $\ldss$. 
\begin{Lem}\label{lem_f_k_nuclearity}
Let $(f_k)_{k\in\N}\subset s$ be an orthonormal sequence and let $(\cN_k)_{k\in\N}$ be a family of finite pairwise disjoint subsets 
of natural numbers. For $r\in\N_0$ let $\sigma_r\colon\N\to\N$ be a bijection such that the sequence 
$(\max_{j\in\cN_{\sigma_r(k)}}|f_j|_r)_{k\in\N}$ is non-decreasing. Then there is $r_0\in\N$ such that  
 \[\lim_{k\to\infty}\frac{k}{\max_{j\in\cN_{\sigma_r(k)}}|f_j|_r}=0\]
for all $r\geq r_0$.
\end{Lem}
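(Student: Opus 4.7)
The plan is to reduce the statement to the Grothendieck--Pietsch characterization of nuclearity for the K\"othe algebra
\[
\lambda^\infty(a_{k,q}),\qquad a_{k,q}:=\max_{j\in\cN_k}|f_j|_q.
\]
By Corollary \ref{cor_comm_subalg_max|f_j|2} this algebra is isomorphic, in particular as a Fr\'echet space, to a closed commutative ${}^*$-subalgebra of $\ldss$. Since $\ldss\cong s$ and every closed subspace of $s$ is nuclear, $\lambda^\infty(a_{k,q})$ is itself nuclear. The Grothendieck--Pietsch condition for $\lambda^\infty$-type K\"othe spaces therefore yields that for every $p\in\N_0$ there exists $q\in\N_0$ with $\sum_{k=1}^\infty a_{k,p}/a_{k,q}<\infty$.

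Orthonormality of $(f_j)_{j\in\N}$ in $\ell_2$ gives $|f_j|_0=\|f_j\|_{\ell_2}=1$, hence $a_{k,0}=1$ for every $k$. Feeding $p=0$ into Grothendieck--Pietsch produces some $r_0\in\N_0$ with
\[
\sum_{k=1}^\infty\frac{1}{a_{k,r_0}}<\infty.
\]
After reindexing by $\sigma_{r_0}$, the sequence $c_k:=1/a_{\sigma_{r_0}(k),r_0}$ is non-negative, non-increasing (by the defining property of $\sigma_{r_0}$) and summable. The elementary fact that a non-increasing summable sequence satisfies $kc_k\to 0$ then yields
\[
\lim_{k\to\infty}\frac{k}{a_{\sigma_{r_0}(k),r_0}}=0,
\]
which is exactly the conclusion for $r=r_0$.

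To propagate this to every $r\geq r_0$, I use that $|\cdot|_r\geq|\cdot|_{r_0}$ pointwise on $s$, so $a_{k,r}\geq a_{k,r_0}$ for all $k$. The key observation is that the $k$-th order statistic is monotone in pointwise domination: writing $M:=a_{\sigma_r(k),r}$, the set $\{j:a_{j,r}\leq M\}$ has cardinality at least $k$ (it contains $\sigma_r(1),\ldots,\sigma_r(k)$), and because $a_{j,r_0}\leq a_{j,r}$ so does the set $\{j:a_{j,r_0}\leq M\}$. Hence $a_{\sigma_{r_0}(k),r_0}\leq M=a_{\sigma_r(k),r}$, and therefore
\[
\frac{k}{a_{\sigma_r(k),r}}\leq\frac{k}{a_{\sigma_{r_0}(k),r_0}}\to 0.
\]
The main conceptual step is the identification of the correct K\"othe-algebra representation together with the invocation of Grothendieck--Pietsch via the orthonormality normalization $a_{k,0}=1$; the remaining steps (summable$\Rightarrow kc_k\to 0$ and the order-statistic comparison) are routine. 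The potential obstacle is therefore simply keeping the two distinct bijections $\sigma_{r_0}$ and $\sigma_r$ from getting tangled in the last step, which the order-statistic comparison bypasses cleanly.
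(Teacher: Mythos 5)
Your proof is correct and follows essentially the same route as the paper: nuclearity of $\lambda^\infty(\max_{j\in\cN_k}|f_j|_q)$ via Corollary \ref{cor_comm_subalg_max|f_j|2}, the Grothendieck--Pietsch criterion applied at level $0$ using $\max_{j\in\cN_k}|f_j|_0=1$ from orthonormality, and the elementary fact that a non-increasing summable sequence $(c_k)$ satisfies $kc_k\to 0$. The only (harmless) deviation is at the very end: the paper simply notes that $\sum_k 1/\max_{j\in\cN_k}|f_j|_r\leq\sum_k 1/\max_{j\in\cN_k}|f_j|_{r_0}<\infty$ for every $r\geq r_0$ and reapplies the same elementary fact, while you instead transfer the limit from $r_0$ to $r$ by your order-statistic comparison of the two bijections, which is a valid alternative to that routine step.
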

\begin{proof}
By Corollary \ref{cor_comm_subalg_max|f_j|2}, $\lambda^\infty(\max_{j\in\cN_k}|f_j|_q)$ is a nuclear space. Hence, by the Grothendieck-Pietsch theorem 
(see e.g. \cite[Th. 28.15]{MeV}), for every $q\in\N_0$ there is $r\in\N_0$ such that 
\[\sum_{k=1}^\infty\frac{\max_{j\in\cN_k}|f_j|_q}{\max_{j\in\cN_k}|f_j|_{r}}<\infty.\]
In particular (for $q=0$), there is $r_0$ such that for $r\geq r_0$ we have 
\[\sum_{k=1}^\infty\frac{1}{\max_{j\in\cN_{\sigma_r(k)}}|f_j|_r}=\sum_{k=1}^\infty\frac{1}{\max_{j\in\cN_k}|f_j|_r}<\infty.\] 
Since the sequence $(\max_{j\in\cN_{\sigma_r(k)}}|f_j|_r)_{k\in\N}$ is non-decreasing, 
the conclusion follows from the elementary theory of number series.
\end{proof}

\begin{Lem}\label{lem_a_k}
Let $(a_k)_{k\in\N}\subset[1,\infty)$ be a non-decreasing sequence such that $a_k\geq 2k$ for $k$ big enough. Then there exist
a strictly increasing sequence $(b_k)_{k\in\N}$ of natural numbers and $C>0$ such that
\[\frac{1}{C}a_k\leq b_k\leq Ca_k^2\]
for every $k\in\N$.
\end{Lem}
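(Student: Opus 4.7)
The plan is to set $b_k := \lceil a_k\rceil + k$ and verify the three required properties in turn. Since $a_k\geq 1$ forces $\lceil a_k\rceil\in\N$, we have $b_k\in\N$; and the monotonicity $a_{k+1}\geq a_k$ gives $\lceil a_{k+1}\rceil\geq \lceil a_k\rceil$, so $b_{k+1}-b_k\geq 1$ and $(b_k)$ is automatically strictly increasing. This is the whole point of adding the $+k$: it upgrades the only-weakly-increasing sequence $\lceil a_k\rceil$ to a strictly increasing one, at a cost that stays under control.

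The lower estimate $\tfrac{1}{C}a_k\leq b_k$ holds trivially with $C=1$ since $b_k\geq \lceil a_k\rceil\geq a_k$, so there is nothing to check there.

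For the upper estimate I would fix $k_0\in\N$ with $a_k\geq 2k$ for all $k\geq k_0$. For such $k$ the inequalities $k\leq a_k/2$ and $a_k\geq 2$ immediately yield
\[
b_k \;\leq\; a_k+1+k \;\leq\; \tfrac{3}{2}a_k+1 \;\leq\; 2a_k \;\leq\; 2a_k^2.
\]
For the finitely many indices $k<k_0$ the values $b_k$ are bounded above by the constant $M:=\lceil a_{k_0}\rceil+k_0$, while $a_k^2\geq 1$ by hypothesis, so these initial terms are absorbed by setting $C:=\max\{2,M\}$.

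There is no substantive obstacle here — the entire content of the lemma is choosing a construction that is simultaneously integer-valued, strictly increasing, and two-sidedly equivalent to $a_k$. The only point of caution is that the hypothesis $a_k\geq 2k$ is assumed only eventually, which is dealt with by the crude uniform bound on the initial (finite) segment. I note in passing that the argument actually produces the stronger linear bound $b_k\leq Ca_k$; the quadratic bound $Ca_k^2$ in the statement leaves slack that is not needed.
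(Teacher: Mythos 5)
Your proof is correct, and it is genuinely different from (and considerably shorter than) the argument in the paper. The paper's proof partitions $\N$ into the maximal blocks $\cN_j$ on which $(a_k)$ is constant and defines $b_k$ recursively block by block, via $b_{m_j+l-1}:=C\lceil\max\{a_{m_j-1}^2,a_{m_j}\}\rceil+l$, followed by an inductive verification in which the quadratic slack $Ca_k^2$ is what makes the recursion close; the case distinction $a_m\geq a_{m-1}^2$ versus $a_{m-1}^2>a_m$ is the technical heart of that argument. You instead take the explicit formula $b_k:=\lceil a_k\rceil+k$, where the $+k$ forces strict monotonicity and the eventual hypothesis $a_k\geq 2k$ (together with $a_k\geq 1$ and a crude bound on the finite initial segment) absorbs that correction into a constant multiple of $a_k$. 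All steps check out: $b_k\in\N$, $b_{k+1}-b_k\geq 1$, $b_k\geq a_k$, and $b_k\leq 2a_k$ for $k\geq k_0$ with the finitely many remaining indices handled by enlarging $C$. Your closing observation is also accurate: your construction yields the stronger linear estimate $b_k\leq Ca_k$, of which the stated bound $b_k\leq Ca_k^2$ is a weakening (and the weaker bound is all that is used later, in the verification of condition (\ref{eq_n_k2}) in the proof of Theorem \ref{th_complemented_subalg}). So your route buys simplicity and a sharper conclusion, while the paper's recursive construction is more elaborate than necessary for this statement.
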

\bproof
Let $k_0\in\N$ be such that $a_k\geq2k$ for $k> k_0$ and choose $C\in\N$ so that
\[\frac{1}{C}a_k\leq k\leq Ca_k^2\]
for $k\in\cN_0:=\{1,\ldots,k_0\}$. Denote also $\cN_1:=\{k\in\N: a_k=a_{k_0+1}\}$ and, recursively,
$\cN_{j+1}:=\{k\in\N: a_k=a_{\max\cN_j+1}\}$. Clearly, $\cN_j$ are finite, pairwise disjoint, $\bigcup_{j\in\N_0}\cN_j=\N$
and $k<l$ for $k\in\cN_j$, $l\in\cN_{j+1}$. 

Let $b_k:=k$ for $k\in\cN_0$ and let 
\[b_{m_j+l-1}:=C\lceil\max\{a_{m_j-1}^2,a_{m_j}\}\rceil+l\]
for $j\in\N$ and $1\leq l\leq|\cN_j|$, where $m_j:=\min\cN_{j}$ and $\lceil x\rceil:=\min\{n\in\Z:n\geq x\}$ stands for the ceiling of $x\in\R$. 
We will show inductively that $(b_k)_{k\in\N}$ is a strictly increasing sequence of natural numbers such that
\begin{equation}\label{eq_C}
\frac{1}{C}a_k\leq b_k\leq Ca_k^2
\end{equation}
for every $k\in\N$. 

Clearly, the condition (\ref{eq_C}) holds for $k\in\cN_0$. Assume that $(b_k)_{k\in\cN_0\cup\ldots\cup\cN_j}$  
is a strictly increasing sequence of natural numbers for which the condition (\ref{eq_C}) holds. 
For simplicity, denote $m:=\min\cN_{j+1}$. By the inductive assumption, we obtain $b_{m-1}\leq C a_{m-1}^2$, hence
\[b_m-b_{m-1}\geq C\lceil\max\{a_{m-1}^2,a_m\}\rceil+1-Ca_{m-1}^2\geq Ca_{m-1}^2+1-Ca_{m-1}^2\geq 1\]
so $b_{m-1}<b_m$, and, clearly, $b_m<b_{m+1}<\ldots<b_{\max\cN_{j+1}}$. 

Fix $1\leq l\leq|N_{j+1}|$. We have 
\[b_{m+l-1}\geq Ca_m=Ca_{m+l-1}\geq\frac{1}{C}a_{m+l-1}\]
so the first inequality in (\ref{eq_C}) holds for $k\in\cN_{j+1}$. Next, by assumption, we get
\begin{equation}\label{eq_a_mgeq}
a_{m+l-1}\geq 2(m+l-1), 
\end{equation}
whence 
\begin{equation}\label{eq_lleq}
l\leq a_{m-l+1}-m+1. 
\end{equation}
Consider two cases.
If $a_m\geq a_{m-1}^2$, then, from (\ref{eq_lleq})
\begin{align*}
b_{m-l+1}&=C\lceil a_m\rceil +l=C\lceil a_{m+l-1}\rceil +l\leq 2Ca_{m+l-1}+a_{m+l-1}-m+1\\
&\leq (2C+1)a_{m+l-1}\leq Ca_{m+l-1}^2,
\end{align*}
where the last inequality holds because $C\geq 1$ and, from (\ref{eq_a_mgeq}), we have
\[a_{m-l+1}\geq 2(m+l-1)\geq 2m\geq 2(k_0+1)\geq 4.\] 
Finally, if $a_{m-1}^2>a_m$, then, from (\ref{eq_a_mgeq}), we obtain (note that, by the definition of 
$\cN_{j}$ and $\cN_{j+1}$, we have $a_{m-1}<a_m$)
\begin{align*}
b_{m-l+1}&=C\lceil a_{m-1}^2\rceil+l\\
&\leq C\lceil (a_{m}-1)^2\rceil+l\\
&=C\lceil a_m^2-2a_m+1\rceil+l\\
&\leq C(a_m^2-2a_m+2)+l\\
&\leq Ca_m^2-2Ca_m+2C+Cl\\
&=Ca_{m+l-1}^2-C(2a_{m+l-1}-2-l)\\
&\leq Ca_{m+l-1}^2-C(4(m+l-1)-2-l)\\
&=Ca_{m+l-1}^2-C(4m+3l-6)\leq Ca_{m+l-1}^2.
\end{align*}
Hence we have shown that the second inequality in (\ref{eq_C}) holds for $k\in\cN_{j+1}$, and the proof is complete.
\bqed

\begin{Prop}\label{prop_characterization_commutative*-subalgebras}
Let $E$ be an infinite-dimensional closed commutative ${}^*$-subalgebra of $\ldss$ and let $(\sum_{j\in\cN_k}\langle\cdot,f_j\rangle f_j)_{k\in\N}$ 
be its canonical Schauder basis.
Moreover, let $(n_k)_{k\in\N}$ be a strictly increasing sequence of natural numbers and let $F$ be the closed ${}^*$-subalgebra of $s$ generated 
by $\{e_{n_k}\}_{k\in\N}$. 
Then the following assertions are equivalent:
\begin{enumerate}[\upshape (i)]
\item $E$ is isomorphic to $F$ as a Fr\'echet ${}^*$-algebra;
\item $\lambda^\infty(\max_{j\in\cN_k}|f_j|_q)\cong\lambda^\infty(n_k^q)$ as Fr\'echet ${}^*$-algebras;
\item there is a bijection $\sigma\colon\N\to\N$ such that $\lambda^\infty(\max_{j\in\cN_{\sigma(k)}}|f_j|_q)=\lambda^\infty(n_k^q)$ 
as Fr\'echet \linebreak ${}^*$-algebras;
\item there is a bijection $\sigma\colon\N\to\N$ such that $\lambda^\infty(\max_{j\in\cN_{\sigma(k)}}|f_j|_q)=\lambda^\infty(n_k^q)$ 
as sets;
\item there is a bijection $\sigma\colon\N\to\N$ such that
\begin{itemize}
 \item[($\alpha$)] $\forall q\in\N_0\;\exists r\in\N_0\;\exists C>0\;\forall k\in\N\quad\max_{j\in\cN_{\sigma(k)}}|f_j|_q\leq Cn_k^r$,
 \item[($\beta$)] $\forall r'\in\N_0\;\exists q'\in\N_0\;\exists C'>0\;\forall k\in\N\quad n_k^{r'}\leq C'\max_{j\in\cN_{\sigma(k)}}|f_j|_{q'}$.
\end{itemize}
\end{enumerate}
\end{Prop}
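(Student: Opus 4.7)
The plan is to reduce the proposition to a direct application of Proposition \ref{prop_isomorphisms_kothe_algebras}, after rewriting both $E$ and $F$ concretely as K\"othe algebras via the representation theorems of the previous section.

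First I would replace $E$ by a K\"othe algebra. By Corollary \ref{cor_comm_subalg_max|f_j|2}, $E$ is isomorphic as a Fr\'echet ${}^*$-algebra to $\lambda^\infty(\max_{j\in\cN_k}|f_j|_q)$ via $\sum_{j\in\cN_k}\langle\cdot,f_j\rangle f_j\mapsto e_k$. Next I would identify $F$ with a K\"othe algebra. Since $F$ is generated by $\{e_{n_k}\}_{k\in\N}$, statement (iv) of Proposition \ref{prop_subalg_kothe_alg}, applied to the K\"othe matrix $(j^q)_{j\in\N,q\in\N_0}$ together with the singleton sets $\cN_k:=\{n_k\}$, yields $F\cong\lambda^\infty(n_k^q)$ as Fr\'echet ${}^*$-algebras via $e_{n_k}\mapsto e_k$. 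Note that both of these K\"othe algebras are genuine Fr\'echet ${}^*$-algebras: the matrix $(n_k^q)$ has entries $\geq 1$ (since $n_k\geq 1$), and similarly $|f_j|_0=\|f_j\|_{\ell_2}=1$ for every $j$, so the matrix $(\max_{j\in\cN_k}|f_j|_q)$ also has entries $\geq 1$. This establishes the equivalence (i)$\Leftrightarrow$(ii).

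The equivalences (ii)$\Leftrightarrow$(iii)$\Leftrightarrow$(iv)$\Leftrightarrow$(v) then follow immediately by applying Proposition \ref{prop_isomorphisms_kothe_algebras} to the K\"othe matrices $A=(\max_{j\in\cN_k}|f_j|_q)_{k\in\N,\,q\in\N_0}$ and $B=(n_k^q)_{k\in\N,\,q\in\N_0}$, since conditions ($\alpha$) and ($\beta$) in Proposition \ref{prop_isomorphisms_kothe_algebras} specialize exactly to the conditions in (v) of the present proposition.

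In short, there is no substantive obstacle: the proposition is a translation of the general K\"othe-algebraic isomorphism criterion (Proposition \ref{prop_isomorphisms_kothe_algebras}) into the language of commutative ${}^*$-subalgebras of $\ldss$ and of $s$, made available by the representation results Corollary \ref{cor_comm_subalg_max|f_j|2} and Corollary \ref{cor_subalg_s2}. The only care required is the bookkeeping check that both K\"othe algebras in question are indeed Fr\'echet ${}^*$-algebras, so that Proposition \ref{prop_isomorphisms_kothe_algebras} applies, and the observation that when $F$ is rewritten through Proposition \ref{prop_subalg_kothe_alg} the maxima collapse to $n_k^q$ because the corresponding sets are singletons.
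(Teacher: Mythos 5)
Your proposal is correct and follows essentially the same route as the paper, whose proof is a one-line citation of Proposition \ref{prop_isomorphisms_kothe_algebras} and Corollary \ref{cor_comm_subalg_max|f_j|2}; you merely spell out the intermediate bookkeeping (identifying $F$ with $\lambda^\infty(n_k^q)$ via Corollary \ref{cor_subalg_s2}/Proposition \ref{prop_subalg_kothe_alg} and checking the matrices have entries $\geq 1$), which is exactly what the paper leaves implicit.
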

\bproof
This is an immediate consequence of Proposition \ref{prop_isomorphisms_kothe_algebras} and Corollary \ref{cor_comm_subalg_max|f_j|2}.
\bqed

\begin{Rem}
In view of Corollary \ref{cor_subalg_s2}, every closed ${}^*$-subalgebra of $s$ is isomorphic as a Fr\'echet ${}^*$-algebra to $\lambda^\infty(n_k^q)$
(i.e. the closed ${}^*$-subalgebra of $s$ generated by $\{e_{n_k}\}_{k\in\N}$)
for some strictly increasing sequence $(n_k)_{k\in\N}\subset\N$, hence Proposition \ref{prop_characterization_commutative*-subalgebras}
characterizes closed commutative ${}^*$-subalgebras of $\ldss$ which are isomorphic as Fr\'echet ${}^*$-algebras to some 
${}^*$-subalgebra of $s$.
\end{Rem}

The property (DN) for the space $s$ gives us the following inequality. 
\begin{Prop}\label{prop_l_2_DN_in_s2}
For every $p,r\in\N_0$ there is $q\in\N_0$ such that for all $\xi\in s$ with $||\xi||_{\ell_2}=1$ the following inequality holds 
\[|\xi|_p^r\leq|\xi|_q.\]
\end{Prop}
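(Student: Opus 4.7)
The plan is to iterate the (DN) inequality from Proposition \ref{prop_l_2_DN_in_s} and then exploit the normalization $\|\xi\|_{\ell_2}=1$ to upgrade an exponent-$2^n$ bound to an exponent-$r$ bound.

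First I would set up the iteration. Proposition \ref{prop_l_2_DN_in_s} gives $|\xi|_p^2\le\|\xi\|_{\ell_2}\,|\xi|_{2p}$ for every $p\in\N_0$ and every $\xi\in s$. Under the assumption $\|\xi\|_{\ell_2}=1$, this reads $|\xi|_p^2\le|\xi|_{2p}$. Applying the same inequality to $2p$ in place of $p$ yields $|\xi|_{2p}^2\le|\xi|_{4p}$, and squaring the previous step gives $|\xi|_p^4\le|\xi|_{2p}^2\le|\xi|_{4p}$. A straightforward induction on $n$ then produces
\[
|\xi|_p^{2^n}\le|\xi|_{2^n p}
\]
for every $n\in\N_0$.

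Next I would make the elementary but crucial observation that $|\xi|_p\ge 1$ whenever $\|\xi\|_{\ell_2}=1$. Indeed, $|\xi|_p^2=\sum_{j=1}^\infty|\xi_j|^2 j^{2p}\ge\sum_{j=1}^\infty|\xi_j|^2=\|\xi\|_{\ell_2}^2=1$, because $j^{2p}\ge 1$ for all $j\in\N$ and $p\in\N_0$. This monotonicity-in-the-exponent remark is the small leverage point: the map $t\mapsto|\xi|_p^t$ is non-decreasing on such $\xi$.

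Finally, given $p,r\in\N_0$, I would pick $n\in\N_0$ with $2^n\ge r$ and set $q:=2^n p$. Combining the two observations,
\[
|\xi|_p^r\le|\xi|_p^{2^n}\le|\xi|_{2^n p}=|\xi|_q,
\]
which is exactly the asserted inequality. I do not anticipate a real obstacle; the only point that requires a moment of thought is the normalization step ensuring $|\xi|_p\ge 1$, without which the passage from the dyadic exponent $2^n$ down to an arbitrary $r$ would fail.
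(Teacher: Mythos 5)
Your proof is correct and follows essentially the same route as the paper: iterate the (DN) inequality from Proposition \ref{prop_l_2_DN_in_s} to get $|\xi|_p^{2^n}\leq|\xi|_{2^n p}$ under the normalization $\|\xi\|_{\ell_2}=1$, then choose $2^n\geq r$ and take $q=2^n p$. Your explicit remark that $|\xi|_p\geq 1$ (needed to pass from exponent $2^n$ down to $r$) is a point the paper leaves implicit, but it is the same argument.
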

\begin{proof}
Take $p,r\in\N_0$ and let $j\in\N_0$ be such that $r\leq 2^j$. Applying iteratively ($j$-times) the inequality from Proposition 
\ref{prop_l_2_DN_in_s} to 
$\xi\in s$ with $||\xi||_{\ell_2}=1$ we get
\[|\xi|_p^r\leq|\xi|_p^{2^j}\leq|\xi|_{2^jp},\]
and thus the required inequality holds for $q=2^jp$.
\end{proof}

Now we are ready to prove Theorem \ref{th_complemented_subalg}.

\vspace{7pt}
\noindent
{\textbf{Proof of Theorem \ref{th_complemented_subalg}.}
(i)$\Rightarrow$(ii): By Corollary \ref{cor_subalg_s2}, each closed ${}^*$-subalgebra of $s$ is a complemented subspace of $s$.

(ii)$\Leftrightarrow$(iii): See e.g. \cite[Prop. 31.7]{MeV}.  

(iii)$\Rightarrow$(iv):
By Corollary \ref{cor_comm_subalg_max|f_j|2} and nuclearity (see e.g. \cite[Prop. 28.16]{MeV}), 
\[E\cong\lambda^\infty\bigg(\max_{j\in\cN_k}|f_j|_q\bigg)=\lambda^1\bigg(\max_{j\in\cN_k}|f_j|_q\bigg)\]
as Fr\'echet ${}^*$-algebras. 
Hence, by \cite[Prop. 5.3]{V4}, the property $(\Omega)$ yields
\[\forall l\;\exists m\;\forall n\;\exists t\;\exists C>0\;
\forall k\quad \max_{j\in\cN_k}|f_j|_l^t\max_{j\in\cN_k}|f_j|_n\leq C\max_{j\in\cN_k}|f_j|_m^{t+1}.\]
In particular, taking $l=0$, we get (iv).

(iv)$\Rightarrow$(i):
Take $p$ from the condition (iv). By Lemma \ref{lem_f_k_nuclearity}(ii), there is $p_1\geq p$ and a bijection $\sigma\colon\N\to\N$ such that
$(\max_{j\in\cN_{\sigma(k)}}|f_j|_{p_1})_{k\in\N}$ is non-decreasing and $\lim_{k\to\infty}\frac{k}{\max_{j\in\cN_{\sigma(k)}}|f_j|_{p_1}}=0$.  
Consequently, for $k$ big enough 
\[\max_{j\in\cN_{\sigma(k)}}|f_j|_{p_1}\geq 2k,\]
and therefore, by Lemma \ref{lem_a_k}, there is a strictly increasing sequence $(n_k)_{k\in\N}\subset\N$ and $C_1>0$ such that
\begin{equation}\label{eq_n_k2}
\frac{1}{C_1}\max_{j\in\cN_{\sigma(k)}}|f_j|_{p_1}\leq n_k\leq C_1\max_{j\in\cN_{\sigma(k)}}|f_j|_{p_1}^2
\end{equation}
for every $k\in\N$. Now, by the conditions (iv) and (\ref{eq_n_k2}), we get that for all $q$ there is $r$ and $C_2:=CC_1^r$ such that 
\[\max_{j\in\cN_{\sigma(k)}}|f_j|_{q}\leq C\max_{j\in\cN_{\sigma(k)}}|f_j|_{p_1}^r\leq C_2n_k^r\]
for all $k\in\N$, so the condition ($\alpha$) from Proposition \ref{prop_characterization_commutative*-subalgebras}(v) holds. 
Finally, by (\ref{eq_n_k2}) and Proposition \ref{prop_l_2_DN_in_s2} we obtain that for all $r'$ there is $q'$ and $C_3:=C_1^{r'}$ such that
\[n_k^{r'}\leq C_3\max_{j\in\cN_{\sigma(k)}}|f_j|_{p_1}^{2{r'}}\leq C_3\max_{j\in\cN_{\sigma(k)}}|f_j|_{q'} \]
for every $k\in\N$. Hence the condition ($\beta$) from Proposition \ref{prop_characterization_commutative*-subalgebras}(v) is satisfied, and therefore,
by Proposition \ref{prop_characterization_commutative*-subalgebras},
$E$ is isomorphic as a Fr\'echet ${}^*$-algebra to the closed ${}^*$-subalgebra of $s$ generated by $\{e_{n_k}\}_{k\in\N}$. 
\bqed

Now, we shall give an example of some class of closed commutative ${}^*$-subalgebras of $\ldss$ which are isomorphic to closed ${}^*$-subalgebras of $s$.

\begin{Exam}
\emph{
Let $\mathbb{H}_1:=[1]$. We define recursively \emph{Hadamard matrices}
\[\mathbb{H}_{2^n}:=
\left[\begin{array}{ll}
\mathbb{H}_{2^{n-1}} & \phantom{-}\mathbb{H}_{2^{n-1}}\\
\mathbb{H}_{2^{n-1}} & -\mathbb{H}_{2^{n-1}} 
\end{array} \right]\] 
for $n\in\N$. Then the matrices $\widehat{\mathbb{H}}_{2^n}:=2^{-\frac{n}{2}}\mathbb{H}_{2^n}$ are unitary, 
and thus their rows form an orthonormal system of $2^n$ vectors. Now fix an arbitrary sequence $(d_n)_{n\in\N}\subset\N_0$ and define
\[U:=
\left[\begin{array}{llll}
\widehat{\mathbb{H}}_{2^{d_1}} & 0                              & 0                              & \ldots\\
0                              & \widehat{\mathbb{H}}_{2^{d_2}} & 0                              & \ldots\\
0                              & 0                              & \widehat{\mathbb{H}}_{2^{d_3}} &       \\
\vdots                         & \vdots                         &                                & \ddots
\end{array} \right].\]
Let $f_k$ denote the $k$-th row of the matrix $U$. Then $(f_k)_{k\in\N}$ is an orthonormal basis of $\ell_2$ and clearly each $f_k$ belongs to $s$. We will show that the closed (maximal) commutative
${}^*$-subalgebra $\alg(\{\langle,\cdot,f_k\rangle f_k\}_{k\in\N})$ of $\ldss$ is isomorphic to some closed ${}^*$-subalgebra of $s$. By Theorem \ref{th_complemented_subalg}, it is enough to prove that
\begin{equation}\label{eq_hadamard}
\exists p\;\forall q\;\exists r\;\exists C>0\;\forall k\quad |f_k|_{\infty,q}\leq C|f_k|_{\infty,p}^r. 
\end{equation}
Fix $q\in\N_0$, $k\in\N$ and find $n\in\N$ such that $2^{d_1}+\ldots+2^{d_{n-1}}<k\leq 2^{d_1}+\ldots+2^{d_n}$. Then
\[\frac{|f_k|_{\infty,q}}{|f_k|_{\infty,1}^{2q}}=\frac{2^{-\frac{d_n}{2}}(2^{d_1}+\ldots+2^{d_n})^q}{2^{-d_nq}(2^{d_1}+\ldots+2^{d_n})^{2q}}
=2^{d_n(q-1/2)}(2^{d_1}+\ldots+2^{d_n})^{-q}\leq1\]
and thus the condition (\ref{eq_hadamard}) holds with $p=C=1$ and $r=2q$. }
\end{Exam}

The next theorem solves in negative \cite[Open Problem 4.13]{Cias}. In contrast to the algebra $s$, whose all closed ${}^*$-subalgebras are complemented subspaces of $s$ (Corollary \ref{cor_subalg_s2}),
Theorems \ref{th_complemented_subalg} and \ref{th_counterexample} imply that there is a closed commutative ${}^*$-subalgebra of $\ldss$ which is not complemented in $\ldss$ 
(otherewise it would have the property ($\Omega$), see \cite[Prop. 31.7]{MeV}). 
In the proof we will use the following identity.  

\begin{Lem}\label{lem_alphas}
For every increasing sequence $(\alpha_j)_{j\in\N}\subset(0,\infty)$ and every $p\in\N$ we have
\[\sup_{j\in\N}\left(\alpha_j^{p-j+1}\cdot\prod_{i=1}^{j-1}\alpha_i\right)=\prod_{i=1}^p \alpha_i.\]
\end{Lem}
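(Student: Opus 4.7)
\noindent\textbf{Proof plan for Lemma \ref{lem_alphas}.}

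Write $f(j):=\alpha_j^{p-j+1}\prod_{i=1}^{j-1}\alpha_i$, so we must show $\sup_{j\in\N}f(j)=\prod_{i=1}^{p}\alpha_i$. The strategy is to observe that the supremum is attained at $j=p$ and to verify $f(j)\leq f(p)$ for all other $j$ by a straightforward monotonicity argument, splitting into the cases $1\leq j\leq p$ and $j\geq p+1$.

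First, a direct computation gives $f(p)=\alpha_p^{1}\cdot\prod_{i=1}^{p-1}\alpha_i=\prod_{i=1}^{p}\alpha_i$, which immediately yields $\sup_{j\in\N}f(j)\geq\prod_{i=1}^{p}\alpha_i$.

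For the reverse inequality, fix $j\in\N$. If $1\leq j\leq p$, then
\[\frac{f(j)}{\prod_{i=1}^{p}\alpha_i}=\frac{\alpha_j^{p-j+1}}{\prod_{i=j}^{p}\alpha_i}=\prod_{i=j}^{p}\frac{\alpha_j}{\alpha_i}\leq 1,\]
because the product has exactly $p-j+1$ factors and $\alpha_j\leq\alpha_i$ for $i\geq j$ by monotonicity of $(\alpha_j)$. If instead $j\geq p+1$, write $j=p+k$ with $k\geq 1$; then $p-j+1=1-k$ and
\[\frac{f(j)}{\prod_{i=1}^{p}\alpha_i}=\alpha_j^{1-k}\prod_{i=p+1}^{j-1}\alpha_i=\frac{\prod_{i=p+1}^{j-1}\alpha_i}{\alpha_j^{k-1}}\leq 1,\]
since the numerator is a product of $k-1$ factors (empty, hence equal to $1$, when $k=1$), each bounded above by $\alpha_j$. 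Combining both cases gives $f(j)\leq\prod_{i=1}^{p}\alpha_i$ for every $j\in\N$, which together with the lower bound above completes the proof.

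There is no real obstacle here; the only mildly delicate point is keeping track of the index ranges and the $p-j+1$ exponent when it becomes zero or negative, which is handled cleanly by the split at $j=p$. The result will then be applied in the argument surrounding Theorem \ref{th_counterexample}.
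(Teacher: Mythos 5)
Your proof is correct and follows essentially the same route as the paper: compare $f(j)$ with $\prod_{i=1}^{p}\alpha_i$ via the ratio, check it is at most $1$ separately for $j\leq p$ and $j\geq p+1$ using monotonicity of $(\alpha_j)$, and note that equality holds at $j=p$. The only cosmetic difference is that you fold $j=p$ into the first case while the paper treats it as the separate equality step; nothing of substance changes.
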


\bproof
For $j\geq p+1$ we get
\[\frac{\alpha_j^{p-j+1}\cdot\prod_{i=1}^{j-1}\alpha_i}{\prod_{i=1}^p \alpha_i}=\alpha_j^{p-j+1}\cdot\prod_{i=p+1}^{j-1}\alpha_i
=\frac{\prod_{i=p+1}^{j-1}\alpha_i}{\alpha_j^{j-p-1}}\leq 1\]
and, similarly, for $j\leq p-1$ we obtain
\[\frac{\alpha_j^{p-j+1}\cdot\prod_{i=1}^{j-1}\alpha_i}{\prod_{i=1}^p \alpha_i}=\frac{\alpha_j^{p-j+1}}{\prod_{i=j}^p\alpha_i}\leq 1.\]
Since $\alpha_p^{p-p+1}\cdot\prod_{i=1}^{p-1}\alpha_i=\prod_{i=1}^p \alpha_i$, the supremum is attained for $j=p$, and we are done. 
\bqed

\begin{Th}\label{th_counterexample}
There is a closed commutative ${}^*$-subalgebra of $\ldss$ which is not isomorphic to any closed ${}^*$-subalgebra of $s$. 
\end{Th}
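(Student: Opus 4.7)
The plan is to apply Theorem \ref{th_complemented_subalg} contrapositively: I will construct an orthonormal sequence $(f_k)_{k\in\N}\subset s$ with pairwise disjoint supports, so that $E:=\alg(\{\langle\cdot,f_k\rangle f_k\}_{k\in\N})$ is an infinite-dimensional closed commutative ${}^*$-subalgebra of $\ldss$ (with canonical Schauder basis $(\langle\cdot,f_k\rangle f_k)_{k\in\N}$, i.e.\ $\cN_k=\{k\}$) and the coefficient sequence fails condition (iv) of that theorem. Guided by Lemma \ref{lem_alphas}, for each $k\in\N$ I pick a strictly increasing sequence of natural numbers $n_{k,1}<n_{k,2}<\ldots$, all distinct across different $k$'s, with $n_{k,j}\geq 2^{(k+1)^j}$, and set
\[f_k:=N_k^{-1}\sum_{j=1}^\infty\frac{\prod_{i=1}^{j-1}n_{k,i}}{n_{k,j}^{j-1}}\,e_{n_{k,j}},\]
where $N_k$ is the $\ell_2$-norm of the unnormalized sum. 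The super-exponential growth of the $n_{k,j}$ in $j$ makes the coefficients decrease fast enough that $f_k\in s$ and $1\leq N_k\leq 2$; pairwise orthogonality is immediate from the disjoint supports.

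The central computation is the norm estimate. Lemma \ref{lem_alphas} with $\alpha_i:=n_{k,i}$ yields
\[\sup_{j\in\N}\frac{\prod_{i<j}n_{k,i}}{n_{k,j}^{j-1}}\,n_{k,j}^q=\prod_{i=1}^{q}n_{k,i},\]
the supremum being attained at $j=q$ (and $j=q+1$). Since $(n_{k,j-1}/n_{k,j})^2\leq 1/4$ uniformly in $k$ and $j$, the remaining squared summands of $|f_k|_q^2$ are geometrically dominated around the peak, and combined with the bound on $N_k$ I obtain a universal constant $M>0$ such that
\[\prod_{i=1}^q n_{k,i}\leq |f_k|_q\leq M\prod_{i=1}^q n_{k,i}\qquad\text{for all }k,q\in\N.\]

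Finally, to derive the failure of (iv), fix an arbitrary $p\in\N_0$ and set $q:=p+1$. For every $r\in\N$,
\[\log\frac{|f_k|_{p+1}}{|f_k|_p^r}\geq\log n_{k,p+1}-(r-1)\sum_{i=1}^{p}\log n_{k,i}-r\log M\geq (k+1)^{p+1}-(r-1)\sum_{i=1}^{p}(k+1)^i-r\log M,\]
which tends to $+\infty$ as $k\to\infty$ because $(k+1)^{p+1}$ grows faster than any polynomial in $k+1$ of degree $p$. Hence $|f_k|_{p+1}/|f_k|_p^r$ is unbounded in $k$ for every $r$, so no constant $C>0$ can make $|f_k|_{p+1}\leq C|f_k|_p^r$ hold uniformly; since $p$ was arbitrary, condition (iv) of Theorem \ref{th_complemented_subalg} fails, and $E$ is not isomorphic as a Fr\'echet ${}^*$-algebra to any closed ${}^*$-subalgebra of $s$. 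The main obstacle is the passage between the supremum quantity controlled by Lemma \ref{lem_alphas} and the $\ell_2$-based norm $|\cdot|_q$ of $s$; the very rapid growth imposed on the $n_{k,j}$'s is precisely what guarantees that this passage costs only a universal multiplicative constant, independent of both $k$ and $q$.
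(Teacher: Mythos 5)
This is correct and is essentially the paper's own proof: the paper likewise reduces the statement to the failure of condition (iv) of Theorem \ref{th_complemented_subalg} for the algebra generated by projections onto an orthonormal sequence $f_k=\sum_j a_{k,j}e_{N_{k,j}}$ with the very same coefficient pattern $a_{k,j}=c_k\prod_{i<j}N_{k,i}/N_{k,j}^{\,j-1}$ and the very same use of Lemma \ref{lem_alphas}, the only differences being its concrete choice $N_{k,1}=m_k$, $N_{k,j+1}=m_k^{N_{k,j}}$ ($m_k$ the $k$-th prime, which gives disjoint supports for free) and its passage to the sup-norms $|\cdot|_{\infty,q}$ via nuclearity, where you instead compare the $\ell_2$-norms $|\cdot|_q$ directly with the peak term. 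One small repair: the conditions you actually impose on $(n_{k,j})$ (strict monotonicity, distinctness across $k$, and the lower bound $n_{k,j}\ge 2^{(k+1)^j}$) do not by themselves yield the two estimates you invoke later, namely the ratio bound $n_{k,j}\ge 2n_{k,j-1}$ and the upper bound $\log_2 n_{k,i}\le (k+1)^i$, so you should fix a definite choice such as $n_{k,j}:=m_k^{(k+1)^j}$ with $m_k$ the $k$-th prime; with it your final display merely acquires the harmless common factor $\log m_k$ and the conclusion is unchanged.
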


\bproof
Let $m_k$ be the $k$-th prime number, $N_{k,1}:=m_k$, $N_{k,j+1}:=m_k^{N_{k,j}}$ for $j,k\in\N$. Denote $a_{k,1}:=c_k$ and
\[a_{k,j}:=c_k\frac{\prod_{i=1}^{j-1}N_{k,i}}{N_{k,j}^{j-1}}\]
for $j\geq 2$, where the sequence $(c_k)_{k\in\N}$ is choosen so that $||(a_{k,j})_{j\in\N}||_{\ell_2}=1$, 
i.e. 
\[c_k:=\bigg(\sum_{j=1}^\infty\bigg(\frac{\prod_{i=1}^{j-1}N_{k,i}}{N_{k,j}^{j-1}}\bigg)^2\bigg)^{-1/2}.\]
The numbers $c_k$ are well-defined, because, by Lemma \ref{lem_alphas},
\begin{align*}
\sum_{j=1}^\infty\bigg(\frac{\prod_{i=1}^{j-1}N_{k,i}}{N_{k,j}^{j-1}}\bigg)^2
&=\sum_{j=1}^\infty\bigg(N_{k,j}^{-j+1}\cdot\prod_{i=1}^{j-1}N_{k,i}\bigg)^2
=\sum_{j=1}^\infty\frac{1}{N_{k,j}^2}\bigg(N_{k,j}^{1-j+1}\cdot\prod_{i=1}^{j-1}N_{k,i}\bigg)^2\\
&\leq\sup_{j\in\N}\bigg(N_{k,j}^{1-j+1}\cdot\prod_{i=1}^{j-1}N_{k,i}\bigg)^2\sum_{j=1}^\infty\frac{1}{N_{k,j}^2}
=N_{k,1}^2\sum_{j=1}^\infty\frac{1}{N_{k,j}^2}<N_{k,1}^2\sum_{j=1}^\infty\frac{1}{j^2}<\infty.
\end{align*}
Finally, define an orthonormal sequence $(f_k)_{k\in\N}$ by 
\[f_k:=\sum_{j=1}^\infty a_{k,j}e_{N_{k,j}}.\]
We will show that $\alg(\{\langle\cdot,f_k\rangle f_k\}_{k\in\N})$ is a closed ${}^*$-subalgebra of $\ldss$ which is not isomorphic as an algebra
to any closed ${}^*$-subalgebra of $s$.
By Theorem \ref{th_complemented_subalg} and nuclearity, it is enough to show that each $f_k$ belongs to $s$ and for every $p,r\in\N$ the 
following condition holds
\[\lim_{k\to\infty}\frac{|f_k|_{\infty,p+1}}{|f_k|_{\infty,p}^r}=\infty,\]
where $|\xi|_{\infty,q}:=\sup_{j\in\N}|\xi_j|j^q$.

Note first that $|f_k|_{\infty,p}=a_{k,p}N_{k,p}^p$. In fact, by Lemma \ref{lem_alphas}, we get
\begin{align*}
|f_k|_{\infty,p}&=\sup_{j\in\N}a_{k,j}N_{k,j}^p=c_k\sup_{j\in\N}\left(N_{k,j}^p\cdot\frac{\prod_{i=1}^{j-1}N_{k,i}}{N_{k,j}^{j-1}}\right)=
c_k\sup_{j\in\N}\left(N_{k,j}^{p-j+1}\cdot\prod_{i=1}^{j-1}N_{k,i}\right)\\
&=c_k\prod_{i=1}^p N_{k,i}=c_kN_{k,p}^p\cdot\frac{\prod_{i=1}^{p-1}N_{k,i}}{N_{k,p}^{p-1}}=a_{k,p}N_{k,p}^p.
\end{align*}
In particular, $f_k\in s$ for $k\in\N$.
Next, for $j,k\in\N$, we have
\[\frac{a_{k,j+1}N_{k,j+1}^j}{a_{k,j}}
=\frac{c_k N_{k,j+1}^j\cdot\frac{\prod_{i=1}^{j}N_{k,i}}{N_{k,j+1}^{j}}}{c_k\frac{\prod_{i=1}^{j-1}N_{k,i}}{N_{k,j}^{j-1}}}
=\frac{\prod_{i=1}^{j}N_{k,i}}{\frac{\prod_{i=1}^{j-1}N_{k,i}}{N_{k,j}^{j-1}}}=N_{k,j}^j.\]
Moreover, for every $j,r\in\N$ we get
\[\frac{N_{k,j+1}}{N_{k,j}^r}=\frac{m_k^{N_{k,j}}}{N_{k,j}^r}\geq \frac{2^{N_{k,j}}}{N_{k,j}^r}\xrightarrow[k\to\infty]{}\infty,\]
and clearly $a_{k,j}\leq 1$ for $j,k\in\N$. Hence, for $p,r\in\N$ we obtain
\begin{align*}
\frac{|f_k|_{\infty,p+1}}{|f_k|_{\infty,p}^r}&=\frac{a_{k,p+1}N_{k,p+1}^{p+1}}{a_{k,p}^rN_{k,p}^{pr}}
=\frac{a_{k,p+1}N_{k,p+1}^{p}}{a_{k,p}}\cdot\frac{1}{a_{k,p}^{r-1}}\cdot\frac{N_{k,p+1}}{N_{k,p}^{pr}}
=N_{k,p}^p\cdot\frac{1}{a_{k,p}^{r-1}}\cdot\frac{N_{k,p+1}}{N_{k,p}^{pr}}\\
&\geq\frac{N_{k,p+1}}{N_{k,p}^{pr}}\xrightarrow[k\to\infty]{}\infty,
\end{align*}
which is the desired conclusion.
\bqed

\subsection*{Acknowledgements}
I would like to thank Pawe{\l} Doma\'nski for his constant and generous support.

{\small

}

\vspace{1cm}
\begin{minipage}{7.5cm}
T. Cia\'s

Faculty of Mathematics and Comp. Sci.

A. Mickiewicz University in Pozna{\'n}

Umultowska 87

61-614 Pozna{\'n}, POLAND

e-mail: tcias@amu.edu.pl
\end{minipage}\


\begin{thebibliography}{99}
\bibitem{BhIO} S. J. Bhatt, A. Inoue, H. Ogi, 
\emph{Spectral invariance, $K$-theory isomorphism and an application to the differential structure of $C^*$-algebras}.
J. Operator Theory \textbf{49} (2003), no. 2, 389--405.

\bibitem{Cias_phd} T. Cia\'s,
\emph{Algebra of smooth operators}.
PhD dissertation, A. Mickiewicz University, Pozna\'n, 2014. 
Available at \url{https://repozytorium.amu.edu.pl/jspui/bitstream/10593/10958/1/phd_thesis_TCias.pdf}.

\bibitem{Cias} T. Cia\'s,
\emph{On the algebra of smooth operators}.
Studia Math. \textbf{218} (2013), no. 2, 145--166.

\bibitem{Conw} J. B. Conway,
\emph{A Course in Functional Analysis}.
Second edition. Graduate Texts in Mathematics, \textbf{96}. Springer-Verlag, New York, 1990.

\bibitem{Cuntz} J. Cuntz,
\emph{Bivariante K-Theorie f{\"u}r lokalkonvexe Algebren und der Chern-Connes-Charakter}.
Doc. Math. \textbf{2} (1997), 139--182.

\bibitem{Cuntz2} J. Cuntz,
\emph{Cyclic theory and the bivariant Chern-Connes character}. 
Noncommutative geometry, Lecture Notes in Math., 1831, Springer, Berlin, 2004, 73--135. 

\bibitem{Dom} P. Doma\'nski,
\emph{Algebra of smooth operators}.
Unpublished note available at\\
\url{www.staff.amu.edu.pl/~domanski/salgebra1.pdf}.

\bibitem{ElNatNest} G. A. Elliot, T. Natsume, R. Nest,
\emph{Cyclic cohomology for one-parameter smooth crossed products}.
Acta Math. \textbf{160} (1998), 285--305.

\bibitem{Fra} M. Fragoulopoulou,
\emph{Topological Algebras with Involution}.
North-Holland Mathematics Studies, \textbf{200}. Elsevier Science B.V., Amsterdam, 2005.

\bibitem{GlockLkamp} H. Gl{\"o}ckner, B. Langkamp,
\emph{Topological algebras of rapidly decreasing matrices and generalizations}.
Topology Appl. \textbf{159} (2012), no. 9, 2420--2422. 

\bibitem{Kot} G. K\"othe,
\emph{Topological Vector Spaces II}.
Springer-Verlag, Berlin-Heidelberg-New York 1979.

\bibitem{MeV} R. Meise, D. Vogt,
\emph{Introduction to functional analysis}.
Oxford University Press, New York 1997.

\bibitem{Phill} N. C. Phillips,
\emph{K-theory for Fr\'echet algebras}.
Internat. J. Math. \textbf{2} (1991), no. 1, 77--129. 

\bibitem{Pir} A. Yu. Pirkovskii,
\emph{Homological dimensions and approximate contractibility for K\"othe algebras}. 
Banach algebras 2009, 261--278, Banach Center Publ., \textbf{91}, Polish Acad. Sci. Inst. Math., Warsaw, 2010.  

\bibitem{Pisz1} K. Piszczek,
\emph{Ideals in the non-commutative Schwartz space}.
To appear in Monatsh. Math. 

\bibitem{Pisz4} K. Piszczek,
\emph{A Jordan-like decomposition in the noncommutative Schwartz space}.
To appear in Bull. Aust. Math. Soc. \textbf{91} (2015), no. 2, 322--330. 

\bibitem{Sch} K. Schm\"udgen, 
\emph{Unbounded Operator Algebras and Representation Theory}. 
Akademie-Verlag, Berlin, 1990.

\bibitem{V4} D. Vogt, 
\emph{Subspaces and quotient spaces of $(s)$}. 
Functional analysis: surveys and recent results (Proc. Conf., Paderborn, 1976), pp. 167--187. 
North-Holland Math. Studies, Vol. \textbf{27}; Notas de Mat., No. 63, North-Holland, Amsterdam, 1977. 

\bibitem{V1} D. Vogt,
\emph{On the functors $\operatorname{Ext}^1(E,F)$ for Fr\'echet spaces}.
Studia Math. \textbf{85} (1987), no. 2, 163--197. 

\bibitem{Zel} W. \.Zelazko,
\emph{Selected topics in topological algebras}.
Lectures 1969/1970, Lectures Notes Series, No. \textbf{31}. Matematisk Institut, Aarhus Universitet, Aarhus 1971. 
\end{thebibliography}
\end{document}